
\documentclass[a4paper,10pt]{article}

\usepackage{latexsym,amssymb,amsmath,amstext,bbm,amsfonts, mathtools,
  color,mathabx,amsthm}
\usepackage{paralist} 
\usepackage[utf8]{inputenc}


\pagestyle{headings}

\newtheorem{theorem}{Theorem}

\newtheorem{lemma}{Lemma}[section]
\newtheorem{proposition}[lemma]{Proposition}

\theoremstyle{definition}
\newtheorem{definition}[lemma]{Definition}
\newtheorem{remark}[lemma]{Remark}

\DeclareMathAlphabet{\mathpzc}{OT1}{pzc}{m}{it}
\newcommand{\smallu}{\mathpzc{u}}
\newcommand{\smallx}{\mathpzc{x}}
\newcommand{\smally}{\mathpzc{y}}


\newcommand{\R}{\mathbbm R}
\newcommand{\N}{\mathbbm N}

\newcommand{\tn}{\textnormal}

\makeatletter
\renewcommand{\@fnsymbol}[1]{\@arabic{#1 }}
\makeatother

\newcommand\unnumberedfootnote[1]{ %
        \let\temp=\thefootnote %
        \renewcommand{\thefootnote}{}%
        \footnote{#1}%
        \let\thefootnote=\temp%
        \addtocounter{footnote}{-1}}

\begin{document}

\title{\Large Marked metric measure spaces} \author{Andrej
  Depperschmidt\thanks{Abteilung f\"ur Mathematische Stochastik,
    Albert-Ludwigs University of Freiburg, Eckerstra\ss e 1, D - 79104
    Freiburg, Germany, e-mail:
    depperschmidt@stochastik.uni-freiburg.de,
    p.p@stochastik.uni-freiburg.de} $\mbox{}^,$, Andreas
  Greven\thanks{Department Mathematik, Universit\"at
    Erlangen-N\"urnberg, Bismarckstra\ss e 1 1/2, D-91054 Erlangen,
    Germany, e-mail: greven@mi.uni-erlangen.de}\; and Peter
  Pfaffelhuber$\mbox{}^{1}$\\ \date{}
}

\maketitle

\unnumberedfootnote{\emph{AMS 2000 subject classification.} 60B10,
  05C80 (Primary) 60B05, 60B12 (Secondary).}

\unnumberedfootnote{\emph{Keywords and phrases.} Metric measure space,
  Gromov metric triples, Gromov-weak topology, Prohorov metric,
  Population model}

\begin{abstract}\noindent
  A marked metric measure space (mmm-space) is a triple $(X,r,\mu)$,
  where $(X,r)$ is a complete and separable metric space and $\mu$ is
  a probability measure on $X\times I$ for some Polish space $I$ of
  possible marks. We study the space of all (equivalence classes of)
  marked metric measure spaces for some fixed $I$. It arises as state
  space in the construction of Markov processes which take values in
  random graphs, e.g.\ tree-valued dynamics describing randomly
  evolving genealogical structures in population models.

  We derive here the topological properties of the space of mmm-spaces
  needed to study convergence in distribution of random
  mmm-spaces. Extending the notion of the Gromov-weak topology
  introduced in (Greven, Pfaffelhuber and Winter, 2009), we define the
  marked Gromov-weak topology, which turns the set of mmm-spaces into
  a Polish space. We give a characterization of tightness for families of  
  distributions of random mmm-spaces and identify a convergence
  determining algebra of functions, called polynomials. 
\end{abstract}

\section{Introduction}
\label{sec:intro}
Metric spaces form a basic structure in mathematics. In probability
theory, they build a natural set-up for the possible outcomes of
random experiments. In particular, the Borel $\sigma$-algebra
generated by the topology induced by a metric space is
fundamental. Here, spaces such as $\mathbbm R^d$ (equipped with the
euclidean metric), the space of c\`adl\`ag paths (equipped with the
Skorohod metric) and the space of probability measures (equipped with
the Prohorov metric) are frequently considered. Recently, random 
metric spaces which differ from these examples, have attracted
attention in probability theory. Most prominent examples are the
description of {\em random genealogical structures} via Aldous'
\emph{Continuum Random Tree} (see \cite{Ald1993} and \cite{LeGall2005}
for many related results) or the \emph{Kingman coalescent}
\cite{Evans2000}, the \emph{Brownian map} \cite{LeGall2007} and the
connected components of the \emph{Erd\H{o}s-Renyi random graph}
\cite{ABBG09}, which are all random compact metric spaces. The former
two examples give rise to trees, which are special metric spaces,
so-called $\mathbbm R$-trees \cite{Dre84}. The latter two examples are
based on random graphs and the underlying metric coincides with the
graph metric.

In order to discuss convergence in distribution of random metric
spaces, the space of metric spaces must be equipped with a topology
such that it becomes a Polish space, i.e.\ a separable topological
space, metrizable by a complete metric. Moreover, it is necessary to
identify criteria for relative compactness in this topology, allowing
to formulate tightness criteria for families of distributions on this
space. Such topological properties of the space of compact metric
spaces have been developed  using the \emph{Gromov-Hausdorff
  topology} (see \cite{Gromov2000, BurBurIva01, EvaPitWin2006}).

Many applications deal with a {\em random evolution of metric
  spaces}. In such processes, it is frequently necessary to pick a
random point from the metric space according to some appropriate
distribution, called the sampling measure. Therefore, a (probability)
measure on the metric space must be specified and the resulting
structure including this sampling measure gives rise to 
\emph{metric measure spaces} (mm-spaces). First stochastic 
processes taking values in mm-spaces, subtree-prune and re-graft
\cite{EvansWinter2006} and the tree-valued Fleming-Viot dynamics
\cite{GrevenPfaffelhuberWinter2010} have been constructed. In
\cite{GPWmetric09} it was shown that the \emph{Gromov-weak topology}
turns the space of mm-spaces into a Polish space; see also
\cite[Chapter 3$\tfrac 12$]{Gromov2000}. Recently, random
configurations and random \emph{dynamics on metric spaces} in the form
of random graphs have been studied as well (see \cite{Durrett2007}). Two
examples are \emph{percolation} \cite{Hofstadt2010} and \emph{epidemic models
  on random graphs} \cite{DDMT2010}. 

The present paper was inspired by the study of a process of random
configurations on evolving trees \cite{DGP2011}. Such objects arise in
mathematical population genetics in the context of {\em Moran models}
or \emph{multi-type branching processes}, where the random genealogy
of a population evolves together with the (genetic) types of
individuals.
At any time the state of such a process is a \emph{marked metric measure
  space}  (mmm-space), where the measure is defined on the product of the
metric space and some fixed mark/type space; see Section~\ref{ss:mmm}. 
Slightly more complicated structures arise in the study of spatial
versions of such population models, where the mark specifies both the
genetic type and the location of an individual
\cite{GrevenSunWinter2011}.

Here we establish topological properties of the space of mmm-spaces needed 
to study {\em convergence in distribution} of {\em random mmm-spaces}. This requires an
extension of the Gromov-weak topology to the marked case (Theorem~\ref{T1}),
which is shown to be Polish (Theorem~\ref{T2}), a characterization of
tightness of distributions in that space (Theorem~\ref{T4}) and a convergence
determining set of functions in the space of probability measures on
mmm-spaces (Theorem~\ref{T5}). 

\section{Main results}
\label{sec:defin-main-results}
First,  we have to introduce some notation. For product spaces $X\times Y\times
\cdots$, we denote the projection operators by $\pi_X, \pi_Y,\dots$. For a
Polish space $E$, we denote by $\mathcal M_1(E)$ the space of probability
measures on the Borel $\sigma$-Algebra on $E$, equipped with the topology of
weak convergence, which is denoted by $\Rightarrow$. Moreover, for $\varphi:
E\to E'$ (for some other Polish space $E'$), the image measure of $\mu$ under
$\varphi$ is denoted $\varphi_\ast\mu$.

Let $\mathcal C_b(E)$ denote the set of bounded  
continuous functions on $E$ and recall that a set of functions $\Pi \subseteq 
\mathcal C_b(E)$ is \emph{separating in $\mathcal M_1(E)$} iff for all 
$E$-valued random variables $X,Y$ we have $X \stackrel{d}{=}Y$ if $\mathbf
E[\Phi(\mathcal X)] = \mathbf E[\Phi(\mathcal Y)]$ for all $\Phi\in\Pi$.
Moreover, $\Pi$ is \emph{convergence determining in $\mathcal M_1(E)$} if for
any sequence $X, X_1, X_2,\dots$ of $E$-valued random variables we have $X_n 
\xRightarrow{n\to\infty}X$ iff $\mathbf E[\Phi(\mathcal X_n)]
\xrightarrow{n\to\infty} \mathbf E[\Phi(\mathcal X)]$ for all $\Phi\in\Pi$.

\medskip 

\sloppy Here and in the whole paper the key ingredients are complete
separable metric spaces $(X,r_X), (Y,r_X),\dots$ and probability
measures $\mu_X, \mu_Y,\dots$ on $X\times I, Y\times I,\dots$ for a
{\em fixed}
\begin{equation}
\label{ag1}
\mbox{complete and separable metric space } (I, r_I),
\end{equation}
which we refer to as the \emph{mark space.}

\subsection{Marked metric measure spaces} {\bf Motivation:}
The \label{ss:mmm} present paper is motivated by genealogical 
structures in population models. Consider a population $X$ of
individuals, all living at the same time. Assume that any pair of
individuals $x,y \in X$ has a common ancestor, and define a metric on
$X$ by setting $r_X(x,y)$ as the time to the most recent common
ancestor of $x$ and $y$, also referred to as their {\em genealogical
  distance}. 
In addition, individual $x\in X$ carries some {\em mark}
$\kappa_X(x)\in I$ for some measurable function $\kappa_X$. In order
to be able to sample individuals from the population, introduce a {\em
  sampling measure} $\nu_X\in\mathcal M_1(X)$ and define
\begin{equation}\label{ag6}
  \mu_X (dx, du) := \nu_X(dx) \otimes \delta_{\kappa_X(x)} (du).
\end{equation}
Recall that most population models, such as branching processes, are
exchangeable. On the level of genealogical trees, this leads to the
following notion of equivalence of marked metric measure spaces: We
call two triples $(X,r_X,\mu_X)$ and $(Y,r_Y,\mu_Y)$ equivalent if
there is an isometry $\varphi: \text{supp}(\nu_X)\to
\text{supp}(\nu_Y)$ such that $\nu_Y = \varphi_\ast \nu_X$ and
$\kappa_Y(\varphi(x)) = \kappa_X(x)$ for all $x\in
\text{supp}(\nu_X)$, i.e.\ marks are preserved under $\varphi$.

It turns out that it requires strong restrictions on $\kappa$ to turn
the set of triples $(X,r_X,\mu_X)$ with $\mu_X$ as in \eqref{ag6} into
a Polish space (see \cite{Piotr11}). Since these restrictions are
frequently not met in applications, we pass to the larger space of
triples $(X,r_X,\mu_X)$ with general $\mu_X\in\mathcal M_1(X\times I)$
right away. This leads to the following key concept. 

\begin{definition}[mmm-spaces]%
  \label{def:mmm} \leavevmode
  \begin{asparaenum}
  \item An \emph{$I$-marked metric measure space}, or
    \emph{mmm-space}, for short, is a triple $(X, r, \mu)$ such that
    $(X,r)$ is a complete and separable metric space and
    $\mu\in\mathcal M_1(X\times I)$, where $X\times I$ is equipped
    with the product topology. To avoid set theoretic pathologies we
    assume that $X \in \mathcal B(\mathbbm R)$. In all applications we
    have in mind this is always the case.
  \item \sloppy Two mmm-spaces $(X, r_X, \mu_X), (Y, r_Y, \mu_Y)$ are
    equivalent if they are \emph{measure- and mark-preserving
      isometric} meaning that there is a measurable $\varphi: 
    \text{supp}((\pi_X)_\ast\mu_X)\to \text{supp}((\pi_Y)_\ast\mu_Y)$
    such that
    \begin{equation}\label{ag2}
      r_{X}(x,x') = r_{Y}(\varphi(x), \varphi(x')) \mbox{ for all }
      x,x'\in \text{supp}((\pi_X)_\ast\mu_X)
    \end{equation}
    and
    \begin{equation}\label{ag3}
      \widetilde\varphi_\ast \mu_X = \mu_Y \mbox{ for } \widetilde\varphi(x,u)
      = (\varphi(x),u).
    \end{equation}
    We denote the equivalence class of $(X,r,\mu)$ by
    $\overline{(X,r,\mu)}.$
  \item We introduce
    \begin{align}
      \label{eq:PP001}
      \mathbbm M^I & :=\left\{\overline{(X,r,\mu)}: (X,r,\mu) \text{
        mmm-space}\right\} 
      \end{align}
      and denote the generic elements of $\mathbbm M^I$ by $\smallx,
      \smally, \dots$.
  \end{asparaenum}
\end{definition}


\begin{remark}[Connection to mm-spaces] 
  In \label{rem:mmsp} \cite{GPWmetric09}, the space of metric measure
  spaces (mm-spaces) was considered. These are triples $(X,r,\mu)$
  where $\mu\in\mathcal M_1(X)$. Two mm-spaces $(X,r_X,\mu_X)$ and
  $(Y,r_Y,\mu_Y)$ are equivalent if $\varphi$ exists such that
  \eqref{ag2} holds. The set of equivalence classes of such mm-spaces
  is denoted by $\mathbbm M$, which is closely connected to the
  structure we have introduced in Definition~\ref{def:mmm}. Namely for
  $\smallx = \overline{(X,r,\mu)}\in\mathbbm M^I$, we set
  \begin{equation}
    \label{eq:901}
    \begin{aligned}
      \pi_1(\smallx) & := \overline{(X,r,(\pi_X)_\ast\mu)} \in
      \mathbbm M, \qquad \pi_2(\smallx) & := (\pi_I)_\ast \mu \in
      \mathcal M_1(I).
    \end{aligned}
  \end{equation}
  Note that $\pi_2(\smallx)$ is the distribution of marks in $I$ and
  $\mathbbm M$ can be identified with $\mathbbm M^I$ if $\#I=1$.
\end{remark}

~

\noindent {\bf Outline:} In Section~\ref{sec:GWT}, we state that the
marked distance matrix distribution, arising by subsequently sampling
points from an mmm-space, uniquely characterizes the mmm-space
(Theorem~\ref{T1}). Hence, we can define the marked Gromov-weak
topology based on weak convergence of marked distance matrix
distributions, which turns $\mathbbm M^I$ into a Polish space
(Theorem~\ref{T2}). Moreover, we characterize relatively compact sets
in the Gromov-weak topology (Theorem~\ref{T3}). In
Subsection~\ref{sec:randommmm} we treat our main subject,
\emph{random} mmm-spaces. We characterize tightness (Theorem
~\ref{T4}) and show that polynomials, specifying an algebra of
real-valued functions on $\mathbbm M^I$, are convergence determining
(Theorem~\ref{T5}).

The proofs of Theorems~\ref{T1} --~\ref{T5}, are given in
Sections~\ref{sec:proofs1},~\ref{ss.prooft2-3},~\ref{sec:proofs5}
and~\ref{sec:proofT5}, respectively.

\subsection{The Gromov-weak topology}
\label{sec:GWT}
Our task is to define a topology that turns $\mathbbm M^I$ into a
Polish space. For this purpose, we introduce the notion of the {\em
  marked distance matrix distribution}.

\begin{definition}[Marked distance matrix distribution]%
  \label{def:mdistmat}  \leavevmode\\
  Let $(X,r,\mu)$ be an mmm-space,
  $\smallx:=\overline{(X,r,\mu)}\in\mathbbm M^I$ and
  \begin{equation}
    \label{eq:22}
    R^{(X,r)}: \begin{cases}  (X\times I)^{\mathbbm N} &\to 
      \mathbbm R_+^{\binom{\mathbbm N}{2}}\times I^{\mathbbm N} , \\
      \big((x_k,u_k)_{k\geq 1}\big) &\mapsto
      \big(\big(r(x_k, x_l)\big)_{1\leq k<l}, (u_k)_{k\geq 1}\big).
    \end{cases} 
  \end{equation}
  The \emph{marked distance matrix distribution of $\smallx =
    \overline{(X,r,\mu)}$} is defined by 
  \begin{align}\label{eq:23}
    \nu^{\smallx} := (R^{(X,r)})_\ast \mu^{\mathbbm N } \in \mathcal
    M_1(\mathbbm R^{\binom{\mathbbm N} 2} \times I^{\mathbbm N}).
  \end{align} 
  For generic elements in $\mathbbm R^{\binom{\mathbbm N}{2}}$ and
  $I^{\mathbbm N}$, we write $\underline{\underline r}$ and $\underline
  u$, respectively. 
\end{definition}

\noindent
In the above definition $(R^{(X,r)})_\ast \mu^{\mathbbm N}$ does not
depend on the particular element $(X,r,\mu)$ of the equivalence class
$\smallx=\overline{(X,r,\mu)}$, i.e.\ $\nu^\smallx$ is well-defined.
The key property of $\mathbb{M}^I$ is that the distance matrix
distribution uniquely determines mmm-spaces as the next result shows.


\begin{theorem}
  \label{T1}
  Let $\smallx, \smally\in\mathbbm M^I$. Then, $\smallx = \smally$ iff
  $\nu^\smallx = \nu^\smally$.
\end{theorem}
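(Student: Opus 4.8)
The plan is to reduce the statement to the corresponding reconstruction theorem for (unmarked) metric measure spaces, which is Gromov's reconstruction theorem as used in \cite{GPWmetric09}. The ``only if'' direction is immediate: if $\smallx=\smally$, choose representatives $(X,r_X,\mu_X)$, $(Y,r_Y,\mu_Y)$ and a measure- and mark-preserving isometry $\varphi$ as in \eqref{ag2}--\eqref{ag3}; then $\widetilde\varphi$ maps $(X\times I)$ onto (the relevant part of) $Y\times I$ and pushes $\mu_X^{\mathbbm N}$ to $\mu_Y^{\mathbbm N}$, and it intertwines $R^{(X,r_X)}$ with $R^{(Y,r_Y)}$, so $\nu^{\smallx}=\nu^{\smally}$. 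The content is the ``if'' direction, and here is how I would organize it.

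First I would reduce to a single metric space. Given $\nu^{\smallx}=\nu^{\smally}$, I want to produce representatives living on a common space. The natural move is to regard a marked metric measure space as an \emph{unmarked} metric measure space on the enlarged space $X\times I$ equipped with a suitable metric, e.g. $\widehat r\big((x,u),(x',u')\big):=r(x,x')+r_I(u,u')$ (or any bounded equivalent; one should be a little careful so that the isometry group of $\widehat r$ does not mix the two coordinates — using $r_I$ itself, or $\arctan$ of it, on the second factor is harmless). Then $(X\times I,\widehat r,\mu)$ is an ordinary mm-space in the sense of Remark~\ref{rem:mmsp}, and the unmarked distance matrix distribution of this mm-space records exactly the pairwise values $r(x_k,x_l)$, $r_I(u_k,u_l)$, and $r(x_k,x_l)+r_I(u_k,u_l)$ for an i.i.d.\ $\mu$-sample — equivalently, it is a deterministic measurable image of $\nu^{\smallx}$, and conversely $\nu^{\smallx}$ is a measurable image of it. Hence $\nu^{\smallx}=\nu^{\smally}$ forces the unmarked distance matrix distributions of $(X\times I,\widehat r,\mu_X)$ and $(Y\times I,\widehat r,\mu_Y)$ to coincide, so by the reconstruction theorem for mm-spaces there is a measure-preserving isometry $\psi:\tn{supp}(\mu_X)\to\tn{supp}(\mu_Y)$ with $\psi_\ast\mu_X=\mu_Y$.

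The remaining step is to check that such a $\psi$ automatically respects the product structure, i.e.\ is of the form $\psi(x,u)=(\varphi(x),u)$ for a measurable $\varphi$ on $\tn{supp}((\pi_X)_\ast\mu_X)$. This is where the choice of $\widehat r$ pays off: since $\widehat r$ is additive over the two coordinates and $r_I$ is a genuine metric on all of $I$, an $\widehat r$-isometry cannot change the $I$-coordinate — more precisely, the $\nu$'s also record the marks $u_k$ themselves (not just their mutual $r_I$-distances), and matching these along the sample shows that $\psi$ leaves the second coordinate fixed $\mu_X$-a.s.; restricting to a conull set and pushing down to the first coordinate yields $\varphi$ with \eqref{ag2}. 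Finally $\psi_\ast\mu_X=\mu_Y$ translates verbatim into $\widetilde\varphi_\ast\mu_X=\mu_Y$, which is \eqref{ag3}, so $\smallx=\smally$.

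I expect the main obstacle to be the measurability bookkeeping in the last step rather than any deep point: one must argue that the abstract isometry $\psi$ furnished by the mm-space reconstruction theorem can be chosen measurable and defined exactly on $\tn{supp}((\pi_X)_\ast\mu_X)\times(\text{relevant marks})$, and that ``the marks are preserved'' — which is visible at the level of the $I^{\mathbbm N}$-coordinate of $\nu$ — can be upgraded from an a.s.\ statement along samples to a pointwise statement on the support, using that the $n$-th sampled pair $(x_n,u_n)$ ranges over a dense subset of $\tn{supp}(\mu_X)$ with probability one. An alternative to invoking the mm-space theorem as a black box would be to adapt its proof directly, via the Gromov-weak/distance-matrix machinery and a Gromov-type embedding of $\tn{supp}(\mu_X)$ and $\tn{supp}(\mu_Y)$ into a common Polish space using the empirical structure of a single $\mu^{\mathbbm N}$-sample; but the reduction above is cleaner and lets us quote \cite{GPWmetric09} for the hard analytic core.
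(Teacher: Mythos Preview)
Your reduction to the unmarked reconstruction theorem has a real gap at the step where you claim the isometry $\psi:\text{supp}(\mu_X)\to\text{supp}(\mu_Y)$ on $(X\times I,\widehat r)$ must preserve the $I$-coordinate. This is simply false in general: take $X=Y=\{a,b\}$ with $r(a,b)=1$, $I=\{0,1\}$ with $r_I(0,1)=1$, and $\mu_X=\mu_Y=\tfrac12\delta_{(a,0)}+\tfrac12\delta_{(b,1)}$; then the swap $\psi(a,0)=(b,1)$, $\psi(b,1)=(a,0)$ is a measure-preserving $\widehat r$-isometry that does not fix marks. The black-box theorem hands you \emph{some} isometry, not one tied to any particular sample, so there is nothing to ``match along the sample'': the actual mark values $u_k$ recorded in $\nu^{\smallx}$ were discarded when you passed to the unmarked distance matrix of $(X\times I,\widehat r,\mu_X)$, and you cannot reattach them to the abstract $\psi$ afterwards. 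Your parenthetical about choosing $\widehat r$ so that its isometry group does not mix coordinates cannot be made to work uniformly for a fixed $I$ and arbitrary $(X,r)$. The obstacle here is not measurability bookkeeping; it is that the reduction genuinely loses the information you need.

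The paper does exactly what you relegate to an ``alternative'': it adapts Vershik's proof of Gromov's reconstruction theorem directly to the marked setting. Since $\nu^{\smallx}=\nu^{\smally}$ as measures on $\mathbbm R_+^{\binom{\mathbbm N}{2}}\times I^{\mathbbm N}$, one couples i.i.d.\ sequences $((x_k,u_k))_k\sim\mu_X^{\mathbbm N}$ and $((y_k,v_k))_k\sim\mu_Y^{\mathbbm N}$ so that $r_X(x_k,x_l)=r_Y(y_k,y_l)$ and $u_k=v_k$ almost surely; on a full-measure realization where in addition the empirical measures converge (Glivenko--Cantelli) and the samples lie in the supports, the assignment $x_k\mapsto y_k$ extends by continuity to the required isometry $\varphi$, and $\widetilde\varphi_\ast\mu_X=\mu_Y$ follows. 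Mark preservation is thus built into the construction from the start rather than argued after the fact---which is precisely the point your reduction cannot reach.
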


This characterization of elements in $\mathbb{M}^I$ allows us to
introduce a topology as follows.

\begin{definition}[Marked Gromov-weak topology]%
  \label{def:mGw} \leavevmode\\
  Let $\smallx, \smallx_1,\smallx_2,\dots\in\mathbbm M^I$. We say that
  $\smallx_n \xrightarrow{n\to\infty} \smallx$ in the \emph{marked
    Gromov-weak topology (MGW topology)} iff
  \begin{align}\label{de1}
    \nu^{\smallx_n} \xRightarrow{n\to\infty} \nu^\smallx
  \end{align}
  in the weak topology on $\mathcal M_1\bigl(\mathbbm R_+^{\binom{\mathbbm
      N}{2}} \times I^{\mathbbm N}\bigr)$, where, as usual, $\mathbbm 
  R_+^{\binom{\mathbbm N}{2}} \times I^{\mathbbm N}$ is equipped with
  the product topology of $\mathbbm R_+$ and $I$, respectively.
  \hfill\qed
\end{definition}

\noindent
The next result implies that $\mathbbm M^I$ is a suitable space to
apply standard techniques of probability theory (most importantly,
weak convergence and martingale problems).
\begin{theorem}
  \label{T2}
  The space $\mathbbm M^I$, equipped with the MGW topology, is Polish.
\end{theorem}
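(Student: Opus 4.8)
The plan is to establish separability and complete metrizability of the MGW topology by transporting both properties through the injection $\smallx\mapsto\nu^\smallx$ from Theorem~\ref{T1}. First I would observe that, by Theorem~\ref{T1}, the map $\smallx\mapsto\nu^\smallx$ is a bijection from $\mathbbm M^I$ onto its image $\mathcal N\subseteq\mathcal M_1(\mathbbm R_+^{\binom{\mathbbm N}2}\times I^{\mathbbm N})$, and by Definition~\ref{def:mGw} it is by construction a homeomorphism when $\mathcal N$ carries the subspace topology inherited from the weak topology. Since $\mathbbm R_+^{\binom{\mathbbm N}2}\times I^{\mathbbm N}$ is Polish (a countable product of Polish spaces), the space $\mathcal M_1$ of probability measures on it is Polish in the weak topology (Prohorov metric), and any subspace of a Polish space is separable and metrizable. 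Hence separability of $\mathbbm M^I$ is immediate, and metrizability follows; what remains is to upgrade this to a \emph{complete} metric, i.e.\ to show that $\mathcal N$ is a $G_\delta$ subset of $\mathcal M_1(\mathbbm R_+^{\binom{\mathbbm N}2}\times I^{\mathbbm N})$, equivalently that $\mathcal N$ is a Polish subspace.

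The natural route to completeness is to characterize intrinsically which measures $\nu\in\mathcal M_1(\mathbbm R_+^{\binom{\mathbbm N}2}\times I^{\mathbbm N})$ arise as $\nu^\smallx$ for some mmm-space $\smallx$, and then check that the defining conditions cut out a $G_\delta$ (or at least Borel Polish) set. The conditions should be: (i) a \emph{consistency/exchangeability} condition — $\nu$ is invariant under the natural action of finite permutations of $\mathbbm N$ on the index set (permuting sampled points simultaneously permutes the distance entries $r(x_k,x_l)$ and the marks $u_k$), and is consistent under projection to fewer coordinates; (ii) an almost-sure \emph{metric/pseudo-metric} condition — under $\nu$, the array $(\underline{\underline r})$ almost surely satisfies the triangle inequality (and symmetry, non-negativity, zero diagonal after the obvious extension), so that it is the distance matrix of an actual (pseudo-)metric on $\mathbbm N$. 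One then appeals to a Gromov-type reconstruction theorem (the marked analogue of Vershik's / Gromov's result, essentially Theorem~\ref{T1} together with its proof machinery) to conclude that any $\nu$ satisfying (i) and (ii) is realized by a genuine mmm-space obtained by taking the completion of $\mathbbm N$ under the sampled pseudo-metric and pushing forward. Both (i) and (ii) are closed conditions, or countable intersections of closed conditions, in the weak topology — permutation invariance is closed because each permutation acts continuously on $\mathcal M_1$, and the triangle-inequality event is closed in the product space so the set of measures giving it full mass is a $G_\delta$ by a standard portmanteau argument (write $\{\nu:\nu(F)=1\}=\bigcap_m\{\nu:\nu(F^{1/m})>1-1/m\}$ for open neighborhoods $F^{1/m}\downarrow F$). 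Therefore $\mathcal N$ is $G_\delta$, hence Polish in the subspace topology, and via the homeomorphism so is $\mathbbm M^I$.

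The main obstacle I expect is (ii), the almost-sure metric condition and the reconstruction step: one must verify carefully that the full-measure triangle-inequality requirement, together with exchangeability and consistency, is not only necessary but \emph{sufficient} for $\nu$ to be of the form $\nu^\smallx$, and that this realization is unique up to the equivalence of Definition~\ref{def:mmm}. This is where the marked structure genuinely adds content over the unmarked case of \cite{GPWmetric09}: the pushforward must carry the marks coherently, so one needs a de~Finetti / directing-measure argument on the product $I^{\mathbbm N}$ compatible with the metric reconstruction — essentially showing that the pair (sampled pseudo-metric, mark sequence) is conditionally i.i.d.\ given a random mmm-space, which is the content underlying Theorem~\ref{T1}. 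Once that correspondence is in hand, the topological bookkeeping (Polishness of countable products, subspaces of Polish spaces, $G_\delta$ characterization of Polish subspaces via Alexandrov's theorem) is routine. A secondary, minor point is to make sure the set-theoretic restriction $X\in\mathcal B(\mathbbm R)$ in Definition~\ref{def:mmm} causes no loss: the reconstructed space is separable, hence Borel-isomorphic to a subset of $\mathbbm R$, so every equivalence class in $\mathbbm M^I$ indeed has a representative of the required form and the bijection with $\mathcal N$ is onto.
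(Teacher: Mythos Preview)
Your strategy---identify $\mathbbm M^I$ with the image $\mathcal N=\{\nu^\smallx:\smallx\in\mathbbm M^I\}$ and show $\mathcal N$ is a $G_\delta$ in $\mathcal M_1(\mathbbm R_+^{\binom{\mathbbm N}2}\times I^{\mathbbm N})$---is genuinely different from the paper's, which instead constructs a concrete complete metric (the marked Gromov--Prohorov metric $d_{\tn{MGP}}$), proves it metrizes the MGW topology, and checks completeness directly via the relative compactness criterion of Theorem~\ref{T3}. The paper never attempts to describe $\mathcal N$ intrinsically.

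Your plan has a real gap, however: the conditions you list for membership in $\mathcal N$---exchangeability plus almost-sure triangle inequality---are not sufficient. Two counterexamples. First, for $\smallx_1\neq\smallx_2$ the mixture $\tfrac12\nu^{\smallx_1}+\tfrac12\nu^{\smallx_2}$ is exchangeable and almost surely a pseudo-metric, but is not $\nu^\smallx$ for any single $\smallx$ (each $\nu^\smallx$ is ergodic under the permutation action by Hewitt--Savage, hence extremal). So at minimum you must add an ergodicity/extremality condition. Second, and more seriously, ergodicity still does not suffice: take $\smallx_n=\overline{(\{1,\dots,n\},r_{\tn{disc}},\tn{unif})}$ with the discrete metric. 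Then $\nu^{\smallx_n}\Rightarrow\delta_{\underline{\underline 1}}$, the Dirac mass at the all-ones array. This limit is exchangeable, deterministic (hence ergodic), and almost surely a metric, yet it is \emph{not} in $\mathcal N$: realizing it would require a separable metric measure space in which i.i.d.\ samples are a.s.\ pairwise at distance $1$, i.e.\ a non-atomic measure on a discrete space, which cannot be separable. In particular $\mathcal N$ is not closed, and your two conditions (both closed) cannot cut it out. The missing ingredient is precisely the ``modulus of mass distribution'' condition \eqref{eq:100} from Remark~\ref{rem:relcpM}, which governs when the empirical measure of the sampled sequence converges; encoding this as a $G_\delta$ condition on $\nu$ and proving sufficiency of the resulting characterization is substantially harder than the route the paper takes.
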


\noindent
In order to study weak convergence in $\mathbbm M^I$, knowledge about
relatively compact sets is crucial. 

\begin{theorem}[Relative compactness in the MGW topology]
  \label{T3}\leavevmode\\
  For $\Gamma\subseteq \mathbbm M^I$ the following assertions are
  equivalent:
  \begin{asparaenum}
  \item[(i)] The set $\Gamma$ is relatively compact with respect to
    the marked Gromov-weak topology.
  \item[(ii)] Both, $\pi_1(\Gamma)$ is relatively compact with
    respect to the Gromov-weak topology on $\mathbbm M$ and
    $\pi_2(\Gamma)$ is relatively compact with respect to the weak
    topology on $\mathcal M_1(I)$.
  \end{asparaenum}
\end{theorem}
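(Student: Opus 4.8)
The plan is to exploit the fact that MGW convergence is, by definition, weak convergence of the marked distance matrix distributions $\nu^\smallx \in \mathcal M_1(\mathbbm R_+^{\binom{\mathbbm N}{2}}\times I^{\mathbbm N})$, so that relative compactness of $\Gamma$ in $\mathbbm M^I$ is equivalent to relative compactness of $\{\nu^\smallx : \smallx\in\Gamma\}$ in $\mathcal M_1(\mathbbm R_+^{\binom{\mathbbm N}{2}}\times I^{\mathbbm N})$, \emph{provided} one knows that the set of all marked distance matrix distributions is closed in that space — a fact that should already be available from the proof of Theorem~\ref{T2} (Polishness), since MGW-completeness amounts precisely to this closedness. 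Granting that, the proof reduces to a tightness statement: by Prohorov's theorem on the Polish space $\mathbbm R_+^{\binom{\mathbbm N}{2}}\times I^{\mathbbm N}$, relative compactness of $\{\nu^\smallx\}$ is equivalent to tightness, and since this is a countable product, tightness is equivalent to tightness of every one-dimensional marginal: the $\mathbbm R_+$-valued coordinates $r(x_k,x_l)$ and the $I$-valued coordinates $u_k$.

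The implication (ii)$\Rightarrow$(i) would then go as follows. If $\pi_1(\Gamma)$ is relatively compact in $\mathbbm M$, then by the corresponding unmarked result of \cite{GPWmetric09} the distance matrix distributions $\{\nu^{\pi_1(\smallx)} : \smallx\in\Gamma\}$ are tight in $\mathcal M_1(\mathbbm R_+^{\binom{\mathbbm N}{2}})$; but $\nu^{\pi_1(\smallx)}$ is exactly the image of $\nu^\smallx$ under the projection forgetting the $I$-coordinates, so all the $\mathbbm R_+$-marginals of $\{\nu^\smallx\}$ are uniformly tight. Likewise, if $\pi_2(\Gamma)=\{(\pi_I)_\ast\mu\}$ is relatively compact in $\mathcal M_1(I)$, then it is tight, and since each coordinate $u_k$ under $\nu^\smallx$ has law $(\pi_I)_\ast\mu$ (sampling is i.i.d.), all the $I$-marginals are uniformly tight as well. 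Hence $\{\nu^\smallx : \smallx\in\Gamma\}$ is tight, thus relatively compact, and by closedness any subsequential limit is again of the form $\nu^\smally$ for some $\smally\in\mathbbm M^I$; this gives (i). For (i)$\Rightarrow$(ii) one runs the argument backwards: relative compactness of $\Gamma$ gives tightness of $\{\nu^\smallx\}$, hence of all marginals; pushing forward under the two continuous projections $\mathbbm R_+^{\binom{\mathbbm N}{2}}\times I^{\mathbbm N}\to \mathbbm R_+^{\binom{\mathbbm N}{2}}$ and $\to I$ (first coordinate) shows that $\{\nu^{\pi_1(\smallx)}\}$ is tight — hence $\pi_1(\Gamma)$ relatively compact in $\mathbbm M$ by the unmarked characterization — and that $\pi_2(\Gamma)$ is tight in $\mathcal M_1(I)$.

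I would also need to check the two maps $\pi_1\colon\mathbbm M^I\to\mathbbm M$ and $\pi_2\colon\mathbbm M^I\to\mathcal M_1(I)$ of \eqref{eq:901} are continuous, so that the direct images of a relatively compact $\Gamma$ are relatively compact; this is immediate once one observes that $\smallx\mapsto\nu^\smallx$ intertwines $\pi_1,\pi_2$ with the coordinate projections on the product space, and weak convergence is preserved under continuous images.

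The main obstacle I anticipate is not in the tightness bookkeeping but in the step that lets one pass from ``$\{\nu^\smallx:\smallx\in\Gamma\}$ is relatively compact in $\mathcal M_1(\mathbbm R_+^{\binom{\mathbbm N}{2}}\times I^{\mathbbm N})$'' back to ``$\Gamma$ is relatively compact in $\mathbbm M^I$'': this requires knowing that the image of $\mathbbm M^I$ under $\smallx\mapsto\nu^\smallx$ is a \emph{closed} subset of $\mathcal M_1(\mathbbm R_+^{\binom{\mathbbm N}{2}}\times I^{\mathbbm N})$, i.e.\ that a weak limit of marked distance matrix distributions is itself a marked distance matrix distribution of some mmm-space. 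This is essentially the content of the completeness half of Theorem~\ref{T2}; if it has been established there (e.g.\ via a Gromov-type reconstruction/embedding argument building a limiting mmm-space from a consistent family of sampled distance-and-mark configurations, using Theorem~\ref{T1} for uniqueness), then the present proof is a short consequence. If not, one would have to carry out that reconstruction here — using that the $\mathbbm R_+$-part converges to a genuine distance matrix distribution on $\mathbbm M$ (again by \cite{GPWmetric09}) and that the joint law with the $I$-marks is consistent and exchangeable, so de Finetti / the Gromov reconstruction applies — and that reconstruction is the genuinely nontrivial ingredient.
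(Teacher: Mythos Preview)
Your approach is genuinely different from the paper's, and the difference is exactly at the point you flag as the ``main obstacle.'' The paper does \emph{not} prove closedness of the image $\{\nu^\smallx:\smallx\in\mathbbm M^I\}\subseteq\mathcal M_1(\mathbbm R_+^{\binom{\mathbbm N}{2}}\times I^{\mathbbm N})$ and then deduce Theorem~\ref{T3} from it; rather, it introduces the marked Gromov--Prohorov metric $d_{\mathrm{MGP}}$ (Definition~\ref{def:MGW}), shows (ii)$\Rightarrow$``relatively compact in MGP'' directly by isometrically embedding all the $(X_n,r_n)$ into a common metric space $(Z,r_Z)$ and using ordinary Prohorov tightness in $\mathcal M_1(Z\times I)$, and only afterwards proves that MGP and MGW coincide (Proposition~\ref{prop:topsame}). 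In other words, the paper builds the limiting mmm-space concretely via embeddings rather than by reconstructing it from a limit distance-matrix distribution.

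Your conditional appeal to Theorem~\ref{T2} is circular in this paper's organization: completeness (Theorem~\ref{T2}) is proved \emph{using} Proposition~\ref{appP:relcp1}, which is Theorem~\ref{T3}, not the other way around. So the ``if it has been established there'' branch is not available, and you would be forced into the ``if not'' branch, i.e.\ a direct reconstruction of an mmm-space from a weak limit of marked distance matrix distributions. That argument can be made to work (exchangeability plus the unmarked Gromov reconstruction from \cite{GPWmetric09} plus a disintegration to recover the marks), but it is precisely the nontrivial step that the MGP-metric route sidesteps. The advantage of your route is that it stays entirely inside $\mathcal M_1(\mathbbm R_+^{\binom{\mathbbm N}{2}}\times I^{\mathbbm N})$ and makes the tightness bookkeeping transparent; the advantage of the paper's route is that the limit object is produced for free as a weak limit of measures on a single Polish space $Z\times I$, avoiding any reconstruction theorem.

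Your (i)$\Rightarrow$(ii) direction is fine and matches the paper's: continuity of $\pi_1,\pi_2$ is exactly what the paper uses (via the sub-algebras $\Pi_{\mathrm{dist}},\Pi_{\mathrm{mark}}$ of polynomials depending only on distances, resp.\ only on the first mark).
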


\begin{remark}[Relative compactness in $\mathbbm M$] 
  For the application \label{rem:relcpM} of Theorem \ref{T3}, it is
  necessary to characterize relatively compact sets in $\mathbbm M$,
  equipped with the Gromov-weak topology. Proposition 7.1 of
  \cite{GPWmetric09} gives such a characterization which we recall:
  Let $r_{12}: (\underline{\underline r}, \underline u)\mapsto
  r_{12}$.  Then the set $\pi_1(\Gamma)$ is relatively compact in
  $\mathbbm M$, iff
\begin{equation}\label{ag7}
  \{(r_{12})_\ast \nu^{\smallx}: \smallx
  \in \Gamma\}\subseteq \mathcal M_1(\mathbbm R_+) \mbox{ is tight}
  \end{equation}
  and
  \begin{align}\label{eq:100}
    \sup_{\smallx = \overline{(X,r,\mu)}\in\Gamma} \mu((x,u)\in
    X\times I: \mu(B_{\varepsilon}(x)\times I)\leq
    \delta)\xrightarrow{\delta\to 0}0
  \end{align}
  for all $\varepsilon>0$, where $B_{\varepsilon}(x)$ is the open
  $\varepsilon$-ball around $x\in X$.
\end{remark}



\subsection{Random mmm-spaces}
\label{sec:randommmm}
When showing convergence in distribution of a sequence of random mmm-spaces,
it must be established that the sequence of distributions is tight and all
potential limit points are the same and hence we need (i) tightness criteria
(see Theorem \ref{T4}) and (ii) a separating (or even 
  convergence-determining) algebra of functions in $\mathcal
M_1(\mathbb{M}^I)$ (see Theorem \ref{T5}).

\begin{theorem}[Tightness of distributions on $\mathbbm M^I$]%
  \label{T4} \leavevmode\\
  For an arbitrary index set $J$ let $\{\mathcal X_j: j\in J\}$ be a family of
  $\mathbbm M^I$-valued random variables. The set of distributions of
  $\{\mathcal X_j: j\in J\}$ is tight iff
  \begin{enumerate}
  \item[(i)] the set of distributions of $\{\pi_1(\mathcal X_j): j\in J\}$ is
    tight as a subset of $\mathcal M_1(\mathbbm M)$,
  \item[(ii)] the set of distributions of $\{\pi_2(\mathcal X_n): n\in\mathbbm
    N\}$ is tight as a subset of $\mathcal M_1(I)$.
  \end{enumerate} 
\end{theorem}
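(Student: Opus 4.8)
The plan is to prove Theorem~\ref{T4} by reducing tightness on $\mathbbm M^I$ to the two coordinate conditions via Theorem~\ref{T3}, which already characterizes relative compactness of (deterministic) subsets of $\mathbbm M^I$ in terms of relative compactness of their images under $\pi_1$ and $\pi_2$. The direction ``(i) and (ii) $\Rightarrow$ tightness'' is the substantive one: given $\varepsilon>0$, use tightness of $\{\text{law}(\pi_1(\mathcal X_j))\}$ to find a relatively compact $\Gamma_1\subseteq\mathbbm M$ with $\mathbf P(\pi_1(\mathcal X_j)\in\Gamma_1)\geq 1-\varepsilon/2$ for all $j$, and tightness of $\{\text{law}(\pi_2(\mathcal X_j))\}$ to find a relatively compact $\Gamma_2\subseteq\mathcal M_1(I)$ with $\mathbf P(\pi_2(\mathcal X_j)\in\Gamma_2)\geq 1-\varepsilon/2$ for all $j$. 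Then set $\Gamma:=\{\smallx\in\mathbbm M^I:\pi_1(\smallx)\in\overline{\Gamma_1},\ \pi_2(\smallx)\in\overline{\Gamma_2}\}$. By Theorem~\ref{T3}, $\Gamma$ is relatively compact in the MGW topology, and $\mathbf P(\mathcal X_j\in\Gamma)\geq 1-\varepsilon$ for all $j$, which is exactly tightness of the family of laws on $\mathbbm M^I$. For this to go through cleanly one wants $\overline\Gamma$ itself to be compact; since $\mathbbm M^I$ is Polish (Theorem~\ref{T2}), passing to closures is harmless, and one checks that $\Gamma$ above is closed (being the preimage of closed sets under the maps $\pi_1,\pi_2$, which are continuous — this continuity should be recorded, as it follows from the definition of the MGW topology together with continuity of the pushforward maps $(\pi_X)_\ast$ and $(\pi_I)_\ast$ on distance matrix distributions).

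The converse direction, ``tightness $\Rightarrow$ (i) and (ii)'', is easier: if $\{\text{law}(\mathcal X_j)\}$ is tight, pick for each $\varepsilon>0$ a compact $K\subseteq\mathbbm M^I$ with $\mathbf P(\mathcal X_j\in K)\geq 1-\varepsilon$ for all $j$; then $\pi_1(K)\subseteq\mathbbm M$ and $\pi_2(K)\subseteq\mathcal M_1(I)$ are compact as continuous images of a compact set, and $\mathbf P(\pi_1(\mathcal X_j)\in\pi_1(K))\geq\mathbf P(\mathcal X_j\in K)\geq 1-\varepsilon$, giving tightness of the $\pi_1$-images, and likewise for $\pi_2$. (The statement of the theorem mixes an arbitrary index set $J$ in (i) with $\mathbbm N$ in (ii); I read this as a typo and treat $J$ throughout, since nothing in the argument uses countability of the index set.)

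The main obstacle is establishing continuity of $\pi_1:\mathbbm M^I\to\mathbbm M$ and $\pi_2:\mathbbm M^I\to\mathcal M_1(I)$, or at least the measurability and closed-preimage properties actually needed, and then invoking Theorem~\ref{T3} with closures taken correctly. Continuity of $\pi_2$ is immediate: $\nu^{\smallx_n}\Rightarrow\nu^{\smallx}$ implies convergence of the one-coordinate marginals $(u_1)_\ast\nu^{\smallx_n}=\pi_2(\smallx_n)\Rightarrow\pi_2(\smallx)$. Continuity of $\pi_1$ is similar: the distance matrix distribution of $\pi_1(\smallx)$ is obtained from $\nu^{\smallx}$ by the continuous projection $(\underline{\underline r},\underline u)\mapsto\underline{\underline r}$, so weak convergence of $\nu^{\smallx_n}$ forces weak convergence of the corresponding (unmarked) distance matrix distributions, which by the definition of the Gromov-weak topology on $\mathbbm M$ is precisely $\pi_1(\smallx_n)\to\pi_1(\smallx)$. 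Once continuity is in hand, the rest is the standard two-sided bookkeeping with $\varepsilon/2$ splits and Theorem~\ref{T3}, and no further difficulty arises.
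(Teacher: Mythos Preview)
Your proposal is correct and follows essentially the same route as the paper: both reduce Theorem~\ref{T4} to Theorem~\ref{T3} via Prohorov's theorem. The paper's own proof is a single terse paragraph that asserts the equivalence directly, whereas you spell out the $\varepsilon/2$ union-bound bookkeeping and explicitly verify continuity of $\pi_1$ and $\pi_2$ (which the paper establishes separately, inside the proof of Proposition~\ref{appP:relcp1}, step (i)$\Rightarrow$(ii)). Your reading of the $\mathbbm N$ versus $J$ discrepancy in condition (ii) as a typo is also correct.
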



\noindent 

In order to define a separating algebra of functions in $\mathcal M_1(\mathbbm
M^I)$, we denote by
\begin{equation}\label{ag8}
  \overline{\mathcal
    C}^{(k)}_n:=\overline{\mathcal C}^{(k)}_{n}(\mathbbm
  R_+^{\binom{\mathbbm N}{2}}\times I^{\mathbbm N})
\end{equation}
the set of bounded, real-valued functions $\phi$ on $\mathbbm
R_+^{\binom{\mathbbm N}{2}} \times I^{\N}$, which are $k$ times continuously
differentiable with respect to the coordinates in $\mathbbm
R_+^{\binom{\mathbbm N}{2}}$ and such that $(\underline{\underline r},
\underline u)\mapsto\phi(\underline{\underline r}, \underline u)$ depends on
the first $\binom n 2$ variables in $\underline{\underline r}$ and the first
$n$ in $\underline u$. (The space $\overline{\mathcal C}_0$ consist of
constant functions.) For $k=0$, we set $ \overline{\mathcal C}^{}_n
:=\overline{\mathcal C}^{(0)}_n$.

\begin{definition}[Polynomials]\mbox{} 
  \begin{asparaenum}
  \item A function $\Phi: \mathbbm M^I\to\mathbbm R$ is a \emph{polynomial},
    if, for some $n\in\mathbbm N_0$, there exists $\phi\in\overline{\mathcal
      C}_n$, such that
    \begin{align}\label{eq:phi}
      \Phi(\smallx) = \langle \nu^\smallx, \phi\rangle := \int
      \phi(\underline{\underline r}, \underline u)
      \nu^\smallx(d\underline{\underline r}, d\underline u)
    \end{align}
    for all $\smallx\in\mathbbm M^I$.  We then write $\Phi = \Phi^{n,\phi}$.
  \item For a polynomial $\Phi$ the smallest number $n$ such that there exists
    $\phi\in\overline{\mathcal C}_n$ satisfying \eqref{eq:phi} is called the
    \emph{degree} of $\Phi$.
  \item We set for $k=0,1,\dots,\infty$
    \begin{equation}
      \label{eq:201}
      \begin{aligned}
        \Pi^k& :=\bigcup_{n=0}^\infty \Pi^k_n, \qquad & \Pi_n^k & :=
        \{\Phi^{n,\phi}: \phi \in \overline{\mathcal C}^{(k)}_{n}\}.
      \end{aligned}
    \end{equation}
  \end{asparaenum}
\end{definition}

The following result shows that polynomials are not only separating, but even
convergence determining in $\mathcal M_1(\mathbbm M^I)$.
\begin{theorem}[Polynomials are convergence determining in
  \mbox{$\mathcal M_1(\mathbbm M^I)$}]\mbox{}
  \label{T5}
  \begin{asparaenum}
  \item For every $k=0,1,\dots,\infty$, the algebra $\Pi^k$ is separating in
    $\mathcal M_1(\mathbbm M^I)$.
  \item There exists a countable algebra $\Pi_\ast^\infty \subseteq
    \Pi^\infty$ that is convergence determining in $\mathcal M_1(\mathbbm
    M^I)$.
  \end{asparaenum}
\end{theorem}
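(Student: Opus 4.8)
The plan is to reduce both assertions to the corresponding statements for unmarked mm-spaces (from \cite{GPWmetric09}) together with standard facts about $\mathcal M_1(I)$, using the product-type structure visible in Theorem~\ref{T3} and in the pair of projections $(\pi_1,\pi_2)$ from Remark~\ref{rem:mmsp}.

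For part (1), I would first check that $\Pi^k$ is indeed an algebra: closure under scalar multiples and sums is immediate from \eqref{eq:phi}, and closure under products follows because the product of $\Phi^{n,\phi}$ and $\Phi^{m,\psi}$ is again a polynomial — one uses the standard trick that $\nu^\smallx$ is (by Definition~\ref{def:mdistmat}) the image of an i.i.d.\ product measure, so a product of two polynomials is realized by relabelling coordinates, i.e.\ $\phi$ acting on sample points $1,\dots,n$ and $\psi$ on sample points $n+1,\dots,n+m$, yielding a function in $\overline{\mathcal C}^{(k)}_{n+m}$. To see that $\Pi^k$ is separating, suppose $\E[\Phi(\mathcal X)] = \E[\Phi(\mathcal Y)]$ for all $\Phi\in\Pi^k$. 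Since functions in $\overline{\mathcal C}^{(k)}_n$ that are finite products of a $C^{(k)}$ function of the finitely many $r$-coordinates and a continuous function of the finitely many $I$-coordinates are dense (in the sup norm, on compacta, by Stone–Weierstrass applied coordinatewise) and the marks live in the Polish space $I$, the collection $\{\E[\Phi(\mathcal X)] : \Phi\in\Pi^k\}$ determines the law of the random element $\nu^{\mathcal X}$ of $\mathcal M_1(\mathbbm R_+^{\binom{\mathbbm N}{2}}\times I^{\mathbbm N})$. Indeed, evaluating against such product test functions pins down all finite-dimensional marginals of $\nu^{\mathcal X}$, hence (by a monotone-class / Dynkin argument) the law of $\nu^{\mathcal X}$ itself; but by Theorem~\ref{T1} the map $\smallx\mapsto\nu^\smallx$ is injective and, by Theorem~\ref{T2} together with the definition of the MGW topology, it is a homeomorphism onto its (Borel) image, so the law of $\nu^{\mathcal X}$ determines the law of $\mathcal X$. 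This gives $\mathcal X\stackrel{d}{=}\mathcal Y$.

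For part (2), I would build $\Pi_\ast^\infty$ explicitly. Fix a countable family $(\phi_i)_{i}\subseteq\bigcup_n\overline{\mathcal C}^{(\infty)}_n$ which is convergence determining for weak convergence on $\mathcal M_1(\mathbbm R_+^{\binom{\mathbbm N}{2}}\times I^{\mathbbm N})$; such a countable family exists because the target is Polish (choose, e.g., products of smooth bump functions in finitely many $r$-coordinates with members of a countable convergence-determining class on $I^{n}$, the latter existing since $I$ is Polish). Let $\Pi_\ast^\infty$ be the (countable) algebra generated by the corresponding polynomials $\Phi^{n,\phi_i}$ together with the constants. Then, for $\smallx_m\to\smallx$ in $\mathbbm M^I$ we have $\nu^{\smallx_m}\Rightarrow\nu^\smallx$ by definition, hence $\Phi(\smallx_m)\to\Phi(\smallx)$ for every $\Phi\in\Pi_\ast^\infty$; conversely, if this convergence of polynomials holds along a sequence, then $\langle\nu^{\smallx_m},\phi_i\rangle\to\langle\nu^\smallx,\phi_i\rangle$ for all $i$, which by choice of $(\phi_i)$ forces $\nu^{\smallx_m}\Rightarrow\nu^\smallx$, i.e.\ $\smallx_m\to\smallx$. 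To promote this from pointwise convergence of the $\smallx_m$ to convergence in distribution of random elements $\mathcal X_m$, I would combine the just-proved separating property (part (1), which gives uniqueness of limits) with tightness: given $\langle\nu^{\mathcal X_m},\phi_i\rangle\to\langle\nu^{\mathcal X},\phi_i\rangle$ in distribution for all $i$, one shows the laws of $\mathcal X_m$ are tight using Theorem~\ref{T4} — the two polynomials $\Phi(\smallx)=\langle\nu^\smallx, r_{12}\wedge c\rangle$ control the push-forward to $\mathcal M_1(\mathbbm M)$ via criterion \eqref{ag7}, while polynomials depending only on $\underline u$ control $\pi_2$ — so Prohorov gives a convergent subsequence whose limit, by part (1), must be the law of $\mathcal X$.

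\textbf{Main obstacle.} The routine-looking but genuinely delicate point is the passage from ``finitely many polynomial expectations agree / converge'' to ``the law of the $\mathcal M_1(\dots)$-valued random variable $\nu^{\mathcal X}$ is determined / converges.'' Concretely, one must show that the countably many maps $\mu\mapsto\langle\mu,\phi_i\rangle$ separate points of (and are convergence determining on) $\mathcal M_1(\mathbbm R_+^{\binom{\mathbbm N}{2}}\times I^{\mathbbm N})$, and then that evaluation of a measure-valued random variable against such a fixed countable convergence-determining class is itself convergence-determining for the law on $\mathcal M_1(\mathcal M_1(\cdot))$ — this is where one needs Theorem~\ref{T4} to supply tightness, since pointwise/finite-dimensional information alone never yields relative compactness. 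The marked structure adds little beyond carrying the Polish factor $I$ through these arguments; the real content is already in the unmarked case and in the general principle that on a Polish space a countable algebra of bounded functions that is convergence determining for points lifts to a convergence-determining class for laws once tightness is available.
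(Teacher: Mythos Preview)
Your overall architecture matches the paper's: part~(1) via the algebra property plus separation of points (the paper invokes Proposition~\ref{P:firstSecond1} and Theorem~3.4.5 of \cite{EthierKurtz86}, you argue directly, but the substance is the same), and part~(2) by constructing a countable $\Pi_\ast^\infty$ which is convergence determining on $\mathbbm M^I$ and then reducing convergence in law to tightness plus uniqueness of limits. That is exactly the paper's route.

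The genuine gap is in your tightness argument for $\{\pi_1(\mathcal X_n)\}$. You write that ``the two polynomials $\Phi(\smallx)=\langle\nu^\smallx, r_{12}\wedge c\rangle$ control the push-forward to $\mathcal M_1(\mathbbm M)$ via criterion \eqref{ag7}'', but \eqref{ag7} is only half of the relative-compactness characterization recalled in Remark~\ref{rem:relcpM}: one also needs the small-ball condition \eqref{eq:100}, and at the level of random mm-spaces this becomes the second condition in Theorem~3 of \cite{GPWmetric09}. Controlling the distribution of $r_{12}$ alone does not give tightness in $\mathbbm M$; without \eqref{eq:100} the family can fail to be relatively compact. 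This second condition is precisely the non-routine step: the paper handles it by introducing the (non-polynomial, not even continuous) functional $z_\varepsilon(\underline{\underline r},\underline u)=\limsup_{n}\tfrac1n\sum_{i=2}^n 1_{\{r_{1i}<\varepsilon\}}$, first establishing the full weak convergence $(\underline{\underline R},\underline U)^{\mathcal X_n}\Rightarrow(\underline{\underline R},\underline U)^{\mathcal X}$ from the polynomial hypothesis, and then applying the Portmanteau theorem to the set $\{z_\varepsilon<\delta\}$ with $\delta$ chosen to avoid atoms. Your proposal does not provide any mechanism for this step, and it is exactly here that the ``real content'' you allude to in your final paragraph is located --- not in the measure-on-measure bookkeeping, which is indeed routine.
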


\begin{remark}[Application to random mmm-spaces]\mbox{}
  \begin{asparaenum}
  \item In order to show convergence in distribution of random mmm-spaces
    $\mathcal X_1, \mathcal X_2,\dots$, there are two strategies. (i) If a
    limit point $\mathcal X$ is already specified, the property $\mathbf
    E[\Phi(\mathcal X_n)] \xrightarrow{n\to\infty}\mathbf E[\Phi(\mathcal X)]$
    for all $\Phi \in \Pi^k$ suffices for convergence $\mathcal X_n
    \xRightarrow{n\to\infty}\mathcal X$ by Theorem~\ref{T5}. (ii) If no limit
    point is identified yet, tightness of the sequence implies existence of
    limit points. Then, convergence of $\mathbf E[\Phi(\mathcal X_n)]$ as a
    sequence in $\mathbbm R$ for all $\Phi\in\Pi^k$ shows uniqueness of the
    limiting object. Both situations arise in practice; see the proof of
    Theorem 1(c) in \cite{DGP2011} for an application of the former and the
    proof of Theorem 4 in \cite{DGP2011} for the latter.
  \item Theorem~\ref{T5} extends Corollary 3.1 of \cite{GPWmetric09} in the
    case of unmarked metric measure spaces. As the theorem shows, convergence
    of polynomials is enough for convergence in the Gromov-weak topology if
    the limit object is known. We will show in the proof that convergence of
    polynomials 
    is enough to ensure tightness of the sequence.
  \end{asparaenum}
\end{remark}

\section{Properties of the marked Gromov-weak topology}
\label{sec:proofs14}
After proving Theorem~\ref{T1} in Section~\ref{sec:proofs1}, we introduce the
{\em Gromov-Prohorov metric} on $\mathbbm M^I$ a concept of
  interest also by itself in Section~\ref{sec:proofs23}. We will show in the
proofs of Theorems~\ref{T2} and~\ref{T3} in Section~\ref{ss.prooft2-3} that
this metric is complete and metrizes the MGW topology.

\subsection{Proof of Theorem~\ref{T1}}
\label{sec:proofs1}
We adapt the proof of Gromov's reconstruction theorem for metric measure
spaces, given by A.~Vershik -- see Chapter~3$\frac12$.5 and 3$\frac12$.7
in~\cite{Gromov2000} -- to the marked case.

Let $\smallx = \overline{(X,r_X,\mu_X)},\smally = \overline{(Y,r_Y,\mu_Y)} \in
\mathbbm M^I$. It is clear that $\nu^\smallx = \nu^\smally$ if $\smallx =
\smally$. Thus, it remains to show that the converse is also true, i.e.\ we
need to show that $\nu^{\smallx} = \nu^{\smally}$ implies that $ \smallx$ and
$\smally$ are measure-preserving isometric (see Definition~\ref{def:mmm}).

If $\nu^\smallx = \nu^\smally$, then there exists a measure $\mu\in\mathcal
M_1\big((X\times I)^\N \times (Y\times I)^\N\big)$ such that
we have (recall \eqref{eq:22}) that $(R^{(X,r)}\circ \pi_{(X\times
  I)^{\mathbbm N}})_\ast \mu = \nu^\smallx$, $(R^{(Y,r)}\circ\pi_{(Y\times
  I)^{\mathbbm N}} )_\ast \mu = \nu^\smally$, and
\begin{align}\label{eq:piXpiY}
  R^{(X,r)}\circ \pi_{(X\times I)^{\mathbbm N}}((\underline x, \underline u),
  (\underline y, \underline v))= R^{(Y,r)}\circ \pi_{(Y\times I)^{\mathbbm
      N}}((\underline x, \underline u), (\underline y, \underline v)),
\end{align}
for $\mu$-almost all $((\underline x, \underline u), (\underline y, \underline
v)) = (((x_1, u_1),(x_2,u_2),\dots), ((y_1,v_1),(y_2,v_2),\dots))$. Then in
particular, by the Glivenko-Cantelli theorem, for $\mu$-almost all
$((\underline x, \underline u), (\underline y, \underline v))$,
\begin{align}\label{eq:glivCan}
  \frac1n \sum_{k=1}^n \delta_{(x_k,u_k)} \xRightarrow{n\to\infty} \mu_X
  \quad\text{ and }\quad \frac1n \sum_{k=1}^n \delta_{(y_k,v_k)}
  \xRightarrow{n\to\infty} \mu_Y.
\end{align}

Now, take any $((\underline x, \underline u), (\underline y, \underline v))$
such that \eqref{eq:piXpiY} and \eqref{eq:glivCan} hold as well as
$(x_n,u_n)\in\text{supp}(\mu_X), (y_n,v_n)\in\text{supp}(\mu_Y)$, $n\in\N$. By
\eqref{eq:piXpiY} we find that $\underline u = \underline v$. Define $\varphi:
\text{supp}((\pi_X)_\ast\mu_X)\to \text{supp}((\pi_Y)_\ast\mu_Y)$ as the only
continuous map satisfying $\varphi(x_n) = y_n$, $n\in\mathbbm N$. By
\eqref{eq:piXpiY}, we obtain that $r_X(x_m, x_n) = r_Y(y_m,y_n) =
r_Y(\varphi(x_m), \varphi(x_n))$, $m,n\in\mathbb N$, which extends to
$\text{supp}((\pi_X)_\ast\mu_X)$ by continuity. In addition, by
\eqref{eq:piXpiY} and continuity, $\widetilde\varphi_\ast\mu_X = \mu_Y$ and so
$(X,r_X,\mu_X)$ and $(Y,r_Y,\mu_Y)$ are measure-reserving isometric, i.e.\
$\smallx = \smally$.

\subsection{The Gromov-Prohorov metric}
\label{sec:proofs23}
In this section, we define the marked Gromov-Prohorov metric on $\mathbbm
M^I$, which generates a topology which is at least as strong as the marked
Gromov-weak topology, see Lemma~\ref{appl:3}. However, since we establish in
Proposition \ref{appP:relcp1} that both topologies have the same compact sets,
we see in Proposition~\ref{prop:topsame} that the topologies are the same, and
hence, the marked Gromov-Prohorov metric metrizes the marked Gromov-weak
topology. We use the same notation for $\varphi$ and $\widetilde\varphi$ as in
Definition~\ref{def:mmm}. Recall that the topology of weak convergence of
probability measures on a separable space is metrized by the Prohorov metric
(see \cite[Theorem~3.3.1]{EthierKurtz86}).



\begin{definition}[The marked Gromov-Prohorov topology]%
  \leavevmode\\
  \label{def:MGW}
  For $\smallx_i = \overline{(X_i,r_i, \mu_i)}\in\mathbbm M^I$, $i=1,2$, set
  \begin{align}\label{appeq:1}
    d_{\tn{MGP}}(\smallx_1, \smallx_2) := \inf_{(Z, \varphi_1, \varphi_2)}
    d_{\tn{Pr}}((\widetilde\varphi_1)_\ast \mu_1, (\widetilde\varphi_2)_\ast
    \mu_2),
  \end{align}
  where the infimum is taken over all complete and separable metric spaces
  $(Z,r_Z)$, isometric embeddings $\varphi_1: X_1\to Z$, $\varphi_2:X_2\to Z$
  and $d_{\tn{Pr}}$ denotes the Prohorov metric on $\mathcal M_1(Z\times I)$,
  based on the metric $\widetilde r_Z = r_Z + r_I$ on $Z\times I$, metrizing
  the product topology. Here, $d_{\tn{MGP}}$ denotes the \emph{marked
    Gromov-Prohorov metric (MGP metric)}. The topology induced by
  $d_{\tn{MGP}}$ is called the \emph{marked Gromov-Prohorov topology (MGP
    topology)}.
\end{definition}

\begin{remark}[Equivalent definition of the MGP metric]%
  \leavevmode\\ 
  \label{rem:eqMGP}
  For $\smallx_i = \overline{(X_i,r_i, \mu_i)} \in \mathbbm M^I$, $i=1,2$,
  denote by $X_1\sqcup X_2$ the disjoint union of $X_1$ and $X_2$. Then,
  \begin{align}\label{appeq:2}
    d_{\tn{MGP}}(\smallx_1, \smallx_2) := \inf_{r_{X_1\sqcup X_2}}
    d_{\tn{Pr}}((\widetilde\varphi_1)_\ast \mu_1, (\widetilde\varphi_2)_\ast
    \mu_2),
  \end{align}
  where the infimum is over all metrics $r_{X_1\sqcup X_2}$ on $X_1\sqcup X_2$
  extending the metrics on $X_1$ and $X_2$ and $\varphi_i: X_i\to X_1\sqcup
  X_2$, $i=1,2$ denote the canonical embeddings.
\end{remark}

\begin{remark}[$d_{\mathrm{MGP}}$ is a metric]  
  The \label{rem:met2} fact that $d_{\tn{MGP}}$ indeed defines a metric
  follows from an easy extension of Lemma~5.4 in \cite{GPWmetric09}. While
  symmetry and positive definiteness are clear from the definition, the
  triangle inequality holds by the following argument: For three mmm-spaces
  $\smallx_i=\overline{(X_i,r_i,\mu_i)} \in\mathbbm M^I$, $i=1,2,3$ and any
  $\varepsilon>0$, by the same construction as in Remark~\ref{rem:eqMGP}, we
  can choose a metric $r_{X_1\sqcup X_2\sqcup X_3}$ on $X_1\sqcup X_2 \sqcup
  X_3$, extending the metrics $r_{X_1}, r_{X_2}, r_{X_3}$, such that
  \begin{equation}
    \label{appeq:12}
    \begin{aligned}
      d_{\text{Pr}}((\widetilde \varphi_1)_\ast\mu_1, (\widetilde
      \varphi_2)_\ast\mu_2) - d_{\tn{MGP}}(\smallx_1, \smallx_2) & <
      \varepsilon, \\ d_{\text{Pr}}((\widetilde \varphi_2)_\ast\mu_2,
      (\widetilde \varphi_3)_\ast\mu_3- d_{\tn{MGP}}(\smallx_2, \smallx_3) & <
      \varepsilon.
    \end{aligned}
  \end{equation}
  Then, we can use the triangle inequality for the Prohorov metric on
  $\mathcal M_1((X_1\sqcup X_2 \sqcup X_3) \times I)$ and let $\varepsilon\to
  0$ to obtain the triangle inequality for $d_{\tn{MGP}}$.
\end{remark}

\begin{lemma}[\sloppy Equivalent description of the MGP topology]%
  \leavevmode\\
  Let \label{appl:equiv}$\smallx=\overline{(X,r_X,\mu_X)}, \smallx_1 =
  \overline{(X_1,r_1,\mu_1)}, \smallx_2 = \overline{(X_2,r_2,\mu_2)}, \ldots
  \in \mathbbm M^I$. Then, $d_{\tn{MGP}}(\smallx_n, \smallx)
  \xrightarrow{n\to\infty} 0$ if and only if there is a complete and separable
  metric space $(Z,r_Z)$ and isometric embeddings $\varphi_X: X\to Z,
  \,\varphi_1: X_1\to Z,\, \varphi_2: X_2\to Z,\dots$ with
  \begin{align}\label{appeq:5}
    d_{\tn{Pr}}((\widetilde\varphi_n)_\ast \mu_n, (\widetilde\varphi_X)_\ast
    \mu_X) \xrightarrow{n\to\infty} 0.
  \end{align}
\end{lemma}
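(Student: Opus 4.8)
The plan is to prove the two implications separately, with the ``if'' direction being essentially immediate and the ``only if'' direction requiring the real work. For the easy direction: if such a common space $(Z,r_Z)$ and embeddings $\varphi_X,\varphi_1,\varphi_2,\dots$ exist with \eqref{appeq:5}, then for each $n$ the triple $(Z,\varphi_n,\varphi_X)$ is one of the competitors in the infimum defining $d_{\tn{MGP}}(\smallx_n,\smallx)$, so $d_{\tn{MGP}}(\smallx_n,\smallx)\le d_{\tn{Pr}}((\widetilde\varphi_n)_\ast\mu_n,(\widetilde\varphi_X)_\ast\mu_X)\to 0$.

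For the ``only if'' direction I would argue as follows. Assume $d_{\tn{MGP}}(\smallx_n,\smallx)\to 0$. For each $n$, pick a near-optimal competitor: by Remark~\ref{rem:eqMGP} there is a metric $r_n$ on the disjoint union $X\sqcup X_n$, extending $r_X$ and $r_n$ on the two pieces, such that the Prohorov distance between the pushforwards of $\mu_X$ and $\mu_n$ to $(X\sqcup X_n)\times I$ is at most $d_{\tn{MGP}}(\smallx_n,\smallx)+1/n$. The task is then to glue all these pairwise gluings together into one common space. The natural construction is to take the countable disjoint union $Z_0:=X\sqcup X_1\sqcup X_2\sqcup\cdots$ and to define a metric on it by declaring, for $a\in X_m$ and $b\in X_n$ with $m\neq n$ (both nonzero), $r_Z(a,b):=\inf_{c\in X}\big(r_m(a,c)+r_n(c,b)\big)$ using the already-chosen metrics to $X$, and keeping $r_m$, $r_n$, $r_X$ on same-index pairs. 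One checks this is a pseudmetric extending all the $r_n$; passing to the metric identification (or just noting that on the relevant supports it is already a genuine metric) and then completing, we get a complete separable metric space $(Z,r_Z)$ with isometric embeddings $\varphi_X\colon X\to Z$ and $\varphi_n\colon X_n\to Z$. Crucially, because $r_Z$ restricted to $X\sqcup X_n$ is dominated by $r_n$ (the infimum over $X$ can only decrease distances, but one must check it does not collapse the two copies — here one uses that $r_n$ itself has the property that distances through $X$ are $\ge$ the direct $X_n$-distance when $a,b$ lie in $X$, and for the cross terms the bound is automatic), the Prohorov distance computed in $Z$ between $(\widetilde\varphi_n)_\ast\mu_n$ and $(\widetilde\varphi_X)_\ast\mu_X$ is at most the one computed in $X\sqcup X_n$, hence $\le d_{\tn{MGP}}(\smallx_n,\smallx)+1/n\to 0$, which is \eqref{appeq:5}.

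The main obstacle is the gluing step: verifying that the proposed formula $r_Z(a,b)=\inf_{c\in X}(r_m(a,c)+r_n(c,b))$ actually defines a pseudometric on $Z_0$ that restricts to the correct metric on each $X\sqcup X_n$ (so that $\varphi_n$ remains isometric) rather than accidentally shrinking distances within a single $X_n$ or between $X_n$ and $X$. This is the standard ``amalgamation of metric spaces over a common subspace'' lemma; since all the $X_n$ are glued only through the single fixed space $X$, it goes through, but the triangle inequality and the ``no collapse'' check require care, and one should note that it suffices to carry out the construction on the supports $\text{supp}((\pi_{X_n})_\ast\mu_n)$ and $\text{supp}((\pi_X)_\ast\mu_X)$, which is all that matters for the Prohorov distance. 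I would also remark that an alternative to the explicit formula is to invoke the corresponding unmarked statement from \cite{GPWmetric09} (their Lemma on equivalent descriptions of the Gromov-Prohorov topology) for the metric-space part and then simply carry the $I$-coordinate along untouched, since the product metric $\widetilde r_Z=r_Z+r_I$ and the marks play no role in the embedding construction; this reduces the present lemma to its unmarked analogue plus bookkeeping. Once the common space is in hand, the convergence \eqref{appeq:5} and its converse are both routine, so the whole proof reduces to this amalgamation construction.
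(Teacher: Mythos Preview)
Your proposal is correct and follows essentially the same approach as the paper: both directions are handled as you describe, and for the ``only if'' part the paper likewise builds the common space as $Z=X\sqcup X_1\sqcup X_2\sqcup\cdots$ by gluing near-optimal pairwise metrics on $X\sqcup X_n$ (the paper defers the amalgamation details to the triangle-inequality remark and to Lemma~5.8 of \cite{GPWmetric09}, whereas you spell out the $\inf_{c\in X}$ formula). One small remark: your hedging that $r_Z|_{X\sqcup X_n}$ is merely ``dominated by'' the chosen metric is unnecessary --- the restriction is exactly the chosen metric, so the Prohorov distances agree on the nose and no ``no-collapse'' check beyond the triangle inequality for each $r_n$ is needed.
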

\begin{proof}
  The assertion is an extension of Lemma 5.8 in \cite{GPWmetric09} to the
  marked case. The proof of the present lemma follows the same lines, which we
  sketch briefly.

  First, the ``if''-direction is clear. For the ``only if'' direction, fix a
  sequence $\varepsilon_1, \varepsilon_2,\dots>0$ with $\varepsilon_n\to 0$ as
  $n\to\infty$. By the same construction as in Remark~\ref{rem:met2}, we can
  construct a metric $r_Z$ on $Z= X\sqcup X_1 \sqcup X_2\sqcup\cdots$ with the
  property that
  \begin{align}\label{appeq:6}
    d_{\tn{Pr}}((\widetilde\varphi_n)_\ast\mu_n,
    (\widetilde\varphi_X)_\ast\mu_X) - d_{\tn{MGP}}(\smallx_n,
    \smallx) < \varepsilon_n,
  \end{align}
  where $\varphi_X: X\to Z$ and $\varphi_n: X_n\to Z, n\in\mathbbm N$ are
  canonical embeddings. The assertion follows.
\end{proof}

\begin{lemma}[MGP \, convergence \, implies \, MGW convergence]%
  \leavevmode\\
  Let \label{appl:3} $\smallx, \smallx_1,\smallx_2,\dots\in\mathbbm M^I$ be
  such that $d_{\tn{MGP}}(\smallx_n, \smallx) \xrightarrow{n\to\infty}
  0$. Then, $\smallx_n\xrightarrow{n\to\infty}\smallx$ in the MGW topology.
\end{lemma}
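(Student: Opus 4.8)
The plan is to show that $d_{\tn{MGP}}(\smallx_n,\smallx)\to 0$ forces $\nu^{\smallx_n}\Rightarrow\nu^{\smallx}$ on $\mathcal M_1(\mathbbm R_+^{\binom{\mathbbm N}{2}}\times I^{\mathbbm N})$, which is exactly MGW convergence by Definition~\ref{def:mGw}. First I would invoke Lemma~\ref{appl:equiv} to realize all the mmm-spaces in a single complete separable metric space: there is $(Z,r_Z)$ and isometric embeddings $\varphi_X\colon X\to Z$, $\varphi_n\colon X_n\to Z$ with $d_{\tn{Pr}}((\widetilde\varphi_n)_\ast\mu_n,(\widetilde\varphi_X)_\ast\mu_X)\to 0$ on $\mathcal M_1(Z\times I)$ (with the metric $\widetilde r_Z=r_Z+r_I$). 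Writing $\mu_n':=(\widetilde\varphi_n)_\ast\mu_n$ and $\mu':=(\widetilde\varphi_X)_\ast\mu_X$, the Prohorov metric metrizes weak convergence on the separable space $Z\times I$, so $\mu_n'\Rightarrow\mu'$ in $\mathcal M_1(Z\times I)$. Crucially, the embeddings are isometric and mark-preserving, so $\nu^{\smallx_n}=(R^{(Z,r_Z)})_\ast(\mu_n')^{\otimes\mathbbm N}$ and $\nu^{\smallx}=(R^{(Z,r_Z)})_\ast(\mu')^{\otimes\mathbbm N}$; that is, both marked distance matrix distributions are now pushforwards under one and the same map $R^{(Z,r_Z)}$ of the $\mathbbm N$-fold products of measures living on the common space $Z\times I$.

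Next I would chain two standard continuity facts. Step one: $\mu_n'\Rightarrow\mu'$ on the separable metric space $Z\times I$ implies $(\mu_n')^{\otimes\mathbbm N}\Rightarrow(\mu')^{\otimes\mathbbm N}$ on $(Z\times I)^{\mathbbm N}$ with the product topology — this is the well-known fact that forming countable products is continuous for weak convergence (it suffices to test against bounded continuous functions depending on finitely many coordinates, where it reduces to the finite-product case, which follows e.g. from convergence of finite-dimensional distributions together with the observation that finite products of weakly convergent sequences converge weakly). Step two: $R^{(Z,r_Z)}\colon (Z\times I)^{\mathbbm N}\to\mathbbm R_+^{\binom{\mathbbm N}{2}}\times I^{\mathbbm N}$ is continuous, since $((x_k,u_k)_k)\mapsto((r_Z(x_k,x_l))_{k<l},(u_k)_k)$ is continuous coordinatewise (the metric $r_Z$ is continuous on $Z\times Z$, each mark projection is continuous) and the target carries the product topology. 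The continuous mapping theorem then gives $(R^{(Z,r_Z)})_\ast(\mu_n')^{\otimes\mathbbm N}\Rightarrow(R^{(Z,r_Z)})_\ast(\mu')^{\otimes\mathbbm N}$, i.e. $\nu^{\smallx_n}\Rightarrow\nu^{\smallx}$, which is the claim.

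The only genuinely delicate point is the first step, continuity of the countable-product map for weak convergence; everything else is bookkeeping. I would handle it by noting that the Borel $\sigma$-algebra on $(Z\times I)^{\mathbbm N}$ equals the product $\sigma$-algebra (true because $Z\times I$ is separable metric), so product measures are well defined, and then reducing to finite coordinates: bounded continuous $f$ on $(Z\times I)^{\mathbbm N}$ can be uniformly approximated by functions depending on finitely many coordinates only if one is careful, but in fact it is cleaner to argue that the finite-dimensional marginals converge weakly — $(\mu_n')^{\otimes m}\Rightarrow(\mu')^{\otimes m}$ for each fixed $m$, which is the classical statement that weak convergence is preserved under finite products — and that the sequence $(\mu_n')^{\otimes\mathbbm N}$ is tight (its marginals are, being convergent hence tight, and tightness on a countable product follows coordinatewise by a diagonal/compact-box argument), so every subsequential limit has the prescribed finite-dimensional marginals and thus equals $(\mu')^{\otimes\mathbbm N}$. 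Alternatively one may cite the corresponding step in the proof of Lemma~5.9 of~\cite{GPWmetric09}, of which the present lemma is the marked analogue, since the marks only enter as an extra factor that is carried along unchanged. Either way the marked case adds nothing essential beyond replacing $X$ by $X\times I$ throughout and observing that the embeddings preserve marks by construction.
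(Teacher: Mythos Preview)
Your proof is correct and follows essentially the same approach as the paper: both invoke Lemma~\ref{appl:equiv} to realize everything in a common $(Z,r_Z)$, pass from $\mu_n'\Rightarrow\mu'$ to $(\mu_n')^{\otimes\mathbbm N}\Rightarrow(\mu')^{\otimes\mathbbm N}$, and then push forward under $R^{(Z,r_Z)}$. The only cosmetic difference is that the paper tests against the convergence-determining class $\bigcup_n\overline{\mathcal C}_n$ (functions depending on finitely many coordinates) and cites Proposition~3.4.5 of \cite{EthierKurtz86}, whereas you invoke the continuous mapping theorem for $R^{(Z,r_Z)}$ directly; these are equivalent formulations of the same step.
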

\begin{proof}
  Let $\smallx= \overline{(X,r,\mu)}, \smallx_1 = \overline{(X_1, r_1,
    \mu_1)}, \smallx_2 = \overline{(X_2, r_2, \mu_2)},\dots$. Take $(Z,r_Z)$
  and isometric embeddings $\varphi_X, \varphi_1, \varphi_2,\dots$ such that
  \eqref{appeq:5} from Lemma~\ref{appl:equiv} holds.

  It is a consequence of Proposition 3.4.5 in \cite{EthierKurtz86} that
  $\bigcup_n \overline{\mathcal C}_n$ is convergence determining in $\mathcal
  M_1(\mathbbm R_+^{\binom{\mathbbm N}{2}} \times I^{\mathbbm N})$; see also
  the proof of Proposition \ref{P:firstSecond1}. Let $\Phi\in\Pi^0$ be such
  that $\Phi(.) = \langle \nu^., \phi\rangle$ for some $\phi\in
  \bigcup_{n=0}^\infty\overline{\mathcal C}_n$. Since
  $(\widetilde\varphi_n)_\ast\mu_n \xRightarrow{n\to\infty}
  (\widetilde\varphi_X)_\ast\mu_X$ by \eqref{appeq:5}, we also have that
  $\big((\widetilde\varphi_n)_\ast\mu_n\big)^{\otimes \mathbbm N}
  \xRightarrow{n\to\infty} \big((\widetilde\varphi_X)_\ast\mu_X\big)^{\otimes
    \mathbbm N}$ in $\mathcal M_1((Z\times I)^{\mathbbm N})$. Hence we can
  conclude that
  \begin{equation}
    \label{appeq:8}
    \begin{aligned}
      \int \phi\big( (r_Z(z_k, z_l))_{1\leq k< l}, & \underline u\big)
      \big((\widetilde\varphi_n)_\ast\mu_n\big)^{\otimes \mathbbm N}
      (d\underline z, d\underline u) \\ & \xrightarrow{n\to\infty} \int
      \phi\big( (r_Z(z_k, z_l))_{1\leq k< l}, \underline u\big)
      \big((\widetilde\varphi_X)_\ast\mu_X\big)^{\otimes \mathbbm
        N}(d\underline z, d\underline u).
    \end{aligned}
  \end{equation}
  Since $\smallx = \overline{(Z,r_Z,(\widetilde\varphi_X)_\ast\mu_X)}$ and
  $\smallx_n = \overline{(Z,r_Z,(\widetilde\varphi_n)_\ast\mu_n)},
  n=1,2,\dots$, this proves that $\langle\nu^{\smallx_n}, \phi\rangle
  \xrightarrow{n\to\infty}\langle\nu^{\smallx}, \phi\rangle$. Because
  $\Phi\in\Pi^0$ was arbitrary, we have that $\nu^{\smallx_n}
  \xRightarrow{n\to\infty}\nu^\smallx$. Then, by definition,
  $\smallx_n\xrightarrow{n\to\infty}\smallx$ in the MGW topology.
\end{proof}

\begin{proposition}[Relative compactness in $\mathbbm M^I$]%
  \label{appP:relcp1}\leavevmode\\
  Let $\Gamma\subseteq \mathbbm M^I$. Then conditions (i) and (ii) of
  Theorem~\ref{T3} are equivalent to
  \begin{asparaenum}
  \item[(iii)] The set $\Gamma$ is relatively compact with respect to the
    marked Gromov-Prohorov topology.
  \end{asparaenum}
\end{proposition}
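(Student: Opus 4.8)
The plan is to establish the equivalence of (iii) with the pair (i)+(ii) of Theorem~\ref{T3} by exploiting the two projection maps $\pi_1:\mathbbm M^I\to\mathbbm M$ and $\pi_2:\mathbbm M^I\to\mathcal M_1(I)$ together with the known characterization of relative compactness in $\mathbbm M$ recalled in Remark~\ref{rem:relcpM}, and the fact that $I$ itself is already Polish so that relative compactness in $\mathcal M_1(I)$ is tightness. The natural route is a cycle of implications, or more precisely to show (iii)$\Rightarrow$(i)+(ii) and (i)+(ii)$\Rightarrow$(iii) directly, since the excerpt's Theorem~\ref{T3} is exactly (i)$\Leftrightarrow$(ii) with (i) reading ``relatively compact in MGW'' — but here it is cleaner to phrase everything through the MGP metric, which we have concrete control over via Lemma~\ref{appl:equiv} and Remark~\ref{rem:eqMGP}.

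First I would prove (iii)$\Rightarrow$(i)+(ii). Suppose $\Gamma$ is relatively compact in the MGP topology. Since $d_{\mathrm{MGP}}$ dominates the MGW topology (Lemma~\ref{appl:3}), $\Gamma$ is relatively compact in MGW as well, but more usefully I would argue that $\pi_1$ and $\pi_2$ are continuous from $(\mathbbm M^I,d_{\mathrm{MGP}})$ to $\mathbbm M$ (with the Gromov--Prohorov metric of \cite{GPWmetric09}) and to $\mathcal M_1(I)$ respectively. Continuity of $\pi_1$ is immediate from the definition \eqref{appeq:1}: the same embeddings $\varphi_1,\varphi_2$ into $Z$ push $\mu_i$ forward to $(\widetilde\varphi_i)_\ast\mu_i$, and applying $(\pi_Z)_\ast$ and using that the Prohorov distance on $\mathcal M_1(Z\times I)$ dominates the Prohorov distance of the $Z$-marginals shows $d_{\mathrm{GP}}(\pi_1\smallx_1,\pi_1\smallx_2)\le d_{\mathrm{MGP}}(\smallx_1,\smallx_2)$. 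For $\pi_2$, note $\pi_2\smallx=(\pi_I)_\ast\mu$ does not even depend on the metric, and again the Prohorov distance of the $I$-marginals is dominated by $d_{\mathrm{Pr}}((\widetilde\varphi_1)_\ast\mu_1,(\widetilde\varphi_2)_\ast\mu_2)$, so $\pi_2$ is $1$-Lipschitz. Continuous images of relatively compact sets are relatively compact, giving (i) and (ii).

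The substantive direction is (i)+(ii)$\Rightarrow$(iii). Here I would use the characterization in Remark~\ref{rem:relcpM}: (i) gives tightness of $\{(r_{12})_\ast\nu^\smallx:\smallx\in\Gamma\}$ in $\mathcal M_1(\mathbbm R_+)$ and the ``mass-concentration'' condition \eqref{eq:100}, while (ii) gives tightness of $\{\pi_2\smallx:\smallx\in\Gamma\}$ in $\mathcal M_1(I)$. The goal is to show that the set of measures $\{(\widetilde\varphi_X)_\ast\mu_X\}$, suitably embedded, can be made to live on a common compact-ish structure; concretely, I would mimic the proof that (ii) of Theorem~\ref{T3} implies relative compactness in $\mathbbm M$ from \cite[Prop.~7.1]{GPWmetric09}, adding the mark coordinate. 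The idea: relative compactness in $\mathbbm M$ produces, for any sequence in $\pi_1(\Gamma)$, a subsequence and a common Polish $(Z,r_Z)$ into which the underlying mm-spaces embed with $(\varphi_X)_\ast\nu_X$ converging weakly in $\mathcal M_1(Z)$; relative compactness of the $I$-marginals gives, along a further subsequence, weak convergence of $\pi_2\smallx$ in $\mathcal M_1(I)$. One then needs to upgrade joint tightness: the family $\{(\widetilde\varphi_X)_\ast\mu_X\}$ in $\mathcal M_1(Z\times I)$ is tight because its two marginals are tight (a product of compacts $K_Z\times K_I$ captures mass $1-\varepsilon$), hence has a weakly convergent subsequence, whose limit is of the form $(\widetilde\varphi_X)_\ast\mu$ for a limiting mmm-space $\smallx$ embedded in $Z$; by Lemma~\ref{appl:equiv} this means $d_{\mathrm{MGP}}(\smallx_n,\smallx)\to0$ along the subsequence, establishing sequential relative compactness and hence (since MGP is metric) relative compactness. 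The main obstacle is precisely this last gluing step — verifying that the weak subsequential limit in $\mathcal M_1(Z\times I)$ genuinely arises from an mmm-space embedded in $Z$ (so that the metric coordinate is honestly carried by the support of the $Z$-marginal) and that \eqref{eq:100} is what guarantees the limiting $Z$-marginal is supported on a separable set serving as the state space; this requires carefully tracking supports and using condition \eqref{eq:100} to rule out "mass escaping to infinitely fine scales", exactly as in the unmarked argument of \cite{GPWmetric09}, with the mark coordinate coming along essentially for free once tightness of the $I$-marginals is in hand.
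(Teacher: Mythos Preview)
Your approach is essentially correct and aligns with the paper's core argument. A few points of comparison and clarification:

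For (iii)$\Rightarrow$(ii), your Lipschitz argument for $\pi_1$ and $\pi_2$ is actually cleaner than the paper's route: the paper first invokes Lemma~\ref{appl:3} to get (iii)$\Rightarrow$(i), and then argues (i)$\Rightarrow$(ii) by testing against polynomials that depend only on distances (respectively, only on the first mark).

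For the direction (ii)$\Rightarrow$(iii), the paper does exactly what you outline: given a sequence in $\Gamma$, use relative compactness of $\pi_1(\Gamma)$ (together with Theorem~5 of \cite{GPWmetric09}, identifying the unmarked Gromov-weak and Gromov-Prohorov topologies) and Lemma~5.7 of \cite{GPWmetric09} to obtain a common $(Z,r_Z)$ in which the $Z$-marginals converge; joint tightness of $\{(\widetilde\varphi_n)_\ast\mu_n\}$ in $\mathcal M_1(Z\times I)$ then follows from tightness of both marginals, and Lemma~\ref{appl:equiv} finishes. Two remarks, however. First, you have mislabeled the hypotheses: tightness of $\{(r_{12})_\ast\nu^{\smallx}\}$ and condition \eqref{eq:100} characterize relative compactness of $\pi_1(\Gamma)$ in $\mathbbm M$, which is part of (ii) of Theorem~\ref{T3}, not (i). Second, and more substantively, your final ``main obstacle'' is illusory: any weak subsequential limit $\widetilde\mu\in\mathcal M_1(Z\times I)$ automatically defines the mmm-space $\overline{(Z,r_Z,\widetilde\mu)}\in\mathbbm M^I$, since $(Z,r_Z)$ is already Polish. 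There is nothing to verify about supports, and no need to invoke \eqref{eq:100} or to ``rule out mass escaping to fine scales'' at this stage---the paper simply takes the limit measure, calls it an mmm-space, and applies Lemma~\ref{appl:equiv}.
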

\begin{proof}
  First, (iii)$\Rightarrow$(i) follows from Lemma~\ref{appl:3}. Thus, it
  remains to show (i)$\Rightarrow$(ii)$\Rightarrow$(iii).

  \medskip 
  \noindent
  (i)$\Rightarrow$(ii): Note that $\Pi^0$ contains functions $\Phi(.)  =
  \langle\nu^., \phi\rangle$ such that $\phi$ does not depend on the variables
  $\underline u\in I^{\mathbbm N}$, as well as functions $\phi$ which only
  depend on $u_1\in I$. Denote the former set of functions by
  $\Pi_{\tn{dist}}$ and the latter by $\Pi_{\tn{mark}}$.

  Assume that the sequence $\smallx_1, \smallx_2,\dots\in \Gamma$ converges to
  $\smallx\in\mathbbm M^I$ with respect to the MGW topology. Since
  $\Phi(\smallx_n) \xrightarrow{n\to\infty} \Phi(\smallx)$ for all
  $\Phi\in\Pi_{\tn{dist}}$, we find that $\pi_1(\smallx_n)
  \xrightarrow{n\to\infty} \pi_1(\smallx)$ in the Gromov-weak topology. In
  addition, $\Phi(\smallx_n) \xrightarrow{n\to\infty} \Phi(\smallx)$ for all
  $\Phi\in\Pi_{\tn{mark}}$ implies $\pi_2(\smallx_n) \xRightarrow{n\to\infty}
  \pi_2(\smallx)$. In particular, (ii) holds.

  \medskip 
  
  \noindent
  (ii)$\Rightarrow$(iii): Recall from Theorem 5 of \cite{GPWmetric09} that the
  (unmarked) Gromov-weak and the (unmarked) Gromov-Prohorov topology
  coincide. For a sequence in $\Gamma$, take a subsequence $\smallx_1 =
  \overline{(X_1, r_1, \mu_1)}, \smallx_2 = \overline{(X_2, r_2,
    \mu_2)},\dots\in\Gamma$ and $\smallx = \overline{(X,r_X,\mu_X)}\in\mathbbm
  M^I$ such that $\pi_1(\smallx_n)\xrightarrow{n\to\infty}
  \pi_1(\smallx)\in\mathbbm M$ in the Gromov-Prohorov topology and
  \begin{align}\label{appeq:9}
    d_{\tn{Pr}}(\pi_2(\smallx_n), \pi_2(\smallx)) \xrightarrow{n\to\infty} 0.
  \end{align}
  Using Lemma 5.7 of \cite{GPWmetric09}, take a complete and separable metric
  space $(Z, r_Z)$, isometric embeddings $\varphi_X: X\to Z, \varphi_1: X_1\to
  Z, \varphi_2: X_2\to Z,\dots$ such that
  \begin{equation}
    \label{appeq:10}
    \begin{aligned}
      d_{\tn{Pr}}((\pi_{X_n}\circ\widetilde\varphi_n)_\ast\mu_n,&
      (\pi_X\circ\widetilde\varphi_X)_\ast\mu_X) \\ & =
      d_{\tn{Pr}}((\pi_{X_n})_\ast((\widetilde\varphi_n)_\ast\mu_n),
      (\pi_X)_\ast((\widetilde\varphi_X)_\ast\mu_X))\xrightarrow{n\to\infty}
      0.
    \end{aligned}
  \end{equation}
  In particular, \eqref{appeq:9} shows that $\{\pi_2(\smallx_n)=
  (\pi_I)_\ast(\widetilde\varphi_n)_\ast \mu_n: n\in\mathbbm N\}$ is
  relatively compact in $\mathcal M_1(I)$ and \eqref{appeq:10} shows that $\{
  (\pi_{X_n})_\ast((\widetilde\varphi_n)_\ast \mu_n): n\in\mathbbm N\}$ is
  relatively compact in $\mathcal M_1(Z)$. This implies that $\{
  (\widetilde\varphi_n)_\ast \mu_n: n\in\mathbbm N\}$ is relatively compact in
  $\mathcal M_1(Z\times I)$. Hence, we can find a convergent subsequence, and
  (iii) follows by Lemma~\ref{appl:equiv}.
\end{proof}

\begin{proposition}[MGW and MGP topologies coincide]%
  \leavevmode\\
  The \label{prop:topsame} \sloppy marked Gromov-Prohorov metric generates the
  marked Gromov-weak topology, i.e.\ the marked Gromov-weak topology and the
  marked Gromov-Prohorov topology coincide.
\end{proposition}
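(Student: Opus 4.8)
The plan is to deduce Proposition~\ref{prop:topsame} by combining Lemma~\ref{appl:3} with Proposition~\ref{appP:relcp1} and Theorem~\ref{T3}, using the general principle that two topologies on the same set which have the same convergent sequences coincide when both are metrizable (or, more carefully, that a finer topology which has no strictly more compact sets than a coarser one must agree with it). Concretely, Lemma~\ref{appl:3} already tells us that the MGP topology is at least as strong as the MGW topology; so it remains to show that MGW convergence implies MGP convergence, i.e.\ that the identity map from $(\mathbbm M^I, \text{MGW})$ to $(\mathbbm M^I, \text{MGP})$ is continuous.

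First I would fix a sequence $\smallx_n \to \smallx$ in the MGW topology and aim to show $d_{\mathrm{MGP}}(\smallx_n,\smallx)\to 0$. Since MGW convergence of $(\smallx_n)$ implies that $\Gamma := \{\smallx, \smallx_1, \smallx_2, \dots\}$ is relatively compact in the MGW topology, Theorem~\ref{T3} gives conditions (i) and (ii), and then Proposition~\ref{appP:relcp1} gives condition (iii): $\Gamma$ is relatively compact in the MGP topology. Hence every subsequence of $(\smallx_n)$ has a further subsequence $(\smallx_{n_k})$ converging in the MGP topology to some $\smally\in\mathbbm M^I$. By Lemma~\ref{appl:3}, this MGP convergence implies $\smallx_{n_k}\to\smally$ in the MGW topology as well; but $\smallx_{n_k}\to\smallx$ in the MGW topology by assumption, and the MGW topology is Hausdorff (indeed Polish, by Theorem~\ref{T2}, or directly: distinct elements of $\mathbbm M^I$ have distinct marked distance matrix distributions by Theorem~\ref{T1}, and $\mathcal M_1(\cdot)$ with the weak topology is Hausdorff). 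Therefore $\smally = \smallx$. Since every subsequence has a further subsequence converging to $\smallx$ in the MGP topology, and MGP is metrizable, the whole sequence $\smallx_n$ converges to $\smallx$ in the MGP topology.

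This shows that MGW convergence and MGP convergence of sequences coincide. Because the MGP topology is metrizable (by $d_{\mathrm{MGP}}$) and the MGW topology is metrizable (by Theorem~\ref{T2} it is Polish, hence metrizable), a subset is closed in one topology iff it is sequentially closed iff it is closed in the other; thus the two topologies have the same closed sets and therefore coincide. Alternatively, one argues directly that the identity maps in both directions are continuous between metrizable spaces since they preserve convergent sequences. Either way, the marked Gromov-Prohorov metric metrizes the marked Gromov-weak topology, which is the assertion.

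The only subtle point—the step I would be most careful about—is the logical order: one must be sure that Theorem~\ref{T2} (Polishness of the MGW topology, in particular its metrizability) is available at this stage, or else avoid invoking it. In fact metrizability of the MGW topology can be obtained independently here: Lemma~\ref{appl:3} shows MGP is finer than MGW, and the argument above shows they have the same convergent sequences, so once we know (from the relative-compactness chain) that every MGW-convergent sequence is MGP-convergent with the same limit, the continuity of the identity $(\mathbbm M^I,\text{MGW})\to(\mathbbm M^I,d_{\mathrm{MGP}})$ follows purely from metrizability of the \emph{target}; combined with Lemma~\ref{appl:3} this already forces the topologies to agree, and then Polishness in Theorem~\ref{T2} follows rather than being assumed. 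So in the write-up I would phrase the proof so that it relies only on Lemma~\ref{appl:3}, Proposition~\ref{appP:relcp1}, Theorem~\ref{T3}, Theorem~\ref{T1} (for Hausdorffness of MGW) and the metrizability of $d_{\mathrm{MGP}}$, and does not circularly invoke Theorem~\ref{T2}.
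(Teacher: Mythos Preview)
Your proposal is correct and follows essentially the same route as the paper's proof: one direction is Lemma~\ref{appl:3}, and for the other you use Proposition~\ref{appP:relcp1} to get MGP relative compactness of an MGW-convergent sequence, extract an MGP-convergent subsequence, and identify its limit via Lemma~\ref{appl:3} and Hausdorffness. The paper cites Proposition~\ref{appP:relcp1} directly for the step (i)$\Rightarrow$(iii) rather than routing through Theorem~\ref{T3}, and it does not spell out the metrizability/circularity discussion you add at the end---your care there is a welcome addition but not a different argument.
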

\begin{proof}
  Let $\smallx, \smallx_1, \smallx_2,\dots\in\mathbbm M^I$. We have to show
  that $\smallx_n\xrightarrow{n\to\infty} \smallx$ in the MGW topology if and
  only if $\smallx_n\xrightarrow{n\to\infty} \smallx$ in the MGP topology. The
  'if'-part was shown in Lemma~\ref{appl:3}.  For the 'only if'-direction,
  assume that $\smallx_n\xrightarrow{n\to\infty} \smallx$ in the MGW
  topology. It suffices to show that for all subsequences of
  $\smallx_1,\smallx_2,\dots$, there is a further subsequence $\smallx_{n_1},
  \smallx_{n_2},\dots$ such that
  \begin{align}\label{appeq:14}
    d_{\tn{MGP}}(\smallx_{n_k}, \smallx)\xrightarrow{k\to\infty} 0.
  \end{align}
  By Proposition~\ref{appP:relcp1} $\{\smallx_n: n\in\mathbbm N\}$ is
  relatively compact in the marked Gromov-Prohorov topology. Therefore, for a
  subsequence, there exists $\smally\in\mathbbm M^I$ and a further subsequence
  $\smallx_{n_1}, \smallx_{n_2},\dots$ with
  $\smallx_{n_k}\xrightarrow{k\to\infty}\smally$ in the Gromov-Prohorov
  topology. By the 'if'-direction it follows that
  $\smallx_{n_k}\xrightarrow{k\to\infty}\smally$ in the Gromov-weak topology,
  which shows that $\smally = \smallx$ and therefore \eqref{appeq:14} holds.
\end{proof}

\noindent
\subsection{Proofs of Theorems~\ref{T2} and~\ref{T3}}
\label{ss.prooft2-3}
Clearly, Theorem~\ref{T3} was already shown in Proposition~\ref{appP:relcp1}.

For Theorem~\ref{T2}, some of our arguments are similar to proofs in
\cite{GPWmetric09}, where the case without marks is treated, which are also
based on a similar metric. We have shown in Proposition~\ref{prop:topsame}
that the marked Gromov-Prohorov metric metrizes the marked Gromov-weak
topology. Hence, we need to show that the marked Gromov-weak topology is
separable, and $d_{\text{MGP}}$ is complete.

We start with separability. Note that the Gromov-Prohorov topology coincides
with the topology of weak convergence on $\{\nu^\smallx: \smallx\in\mathbbm
M^I\}\subseteq \mathcal M_1(\mathbbm R_+^{\binom{\mathbbm N}{2}}\times
I^{\mathbbm N})$. Hence, separability follows from separability of the
topology of weak convergence on $\mathcal M_1(\mathbbm R_+^{\binom{\mathbbm
    N}{2}}\times I^{\mathbbm N})$.
  
For completeness, consider a Cauchy sequence $\smallx_1,\smallx_2, \dots \in
\mathbbm M^I$. It suffices to show that there is a convergent
subsequence. Note that $\pi_1(\smallx_n)$ is Cauchy in $\mathbbm M$ and
$\pi_2(\smallx_n)$ is Cauchy in $\mathcal M_1(I)$. In particular,
$\{\pi_i(\smallx_n): n\in\mathbbm N\}, i=1,2$ are relatively compact. By
Proposition~\ref{appP:relcp1}, this implies that $\{\smallx_n: n\in\mathbbm
N\}$ is relatively compact in $\mathbbm M^I$ and thus, there exists a
convergent subsequence.

\section{Properties of random mmm-spaces}
\label{s.randommmm}
In this section we prove the probabilistic statements which we asserted in
Subsection \ref{sec:randommmm}. In particular, we prove Theorems~\ref{T4} in
Section~\ref{sec:proofs5} and Theorem~\ref{T5} in
Section~\ref{sec:proofT5}. In Section \ref{sec:proofs5poly} we give properties
of polynomials a class of functions not only crucial for the
  topology of $\mathbb{M}^I$ but also to formulate martingale problems (see
  \cite{DGP2011,GrevenPfaffelhuberWinter2010}).

\subsection{Proof of Theorem~\ref{T4}}
\label{sec:proofs5}
The proof is an easy consequence of Theorem~\ref{T3}: By Prohorov's Theorem,
the family of distributions of $\{\mathcal X_j: j\in J\}$ is tight iff for all
$\varepsilon>0$ there is $\Gamma_\varepsilon\subseteq \mathbbm M^I$ relatively
compact with $\inf_{j\in J} \mathbf P(\mathcal
X_j\in\Gamma_\varepsilon)>1-\varepsilon$. By Theorem~\ref{T3} the latter is
the case iff for all $\varepsilon>0$ there are relatively compact
$\Gamma_\varepsilon^1\subseteq \mathbbm M$ and $\Gamma_\varepsilon^2\subseteq
\mathcal M_1(I)$ such that
\begin{align}\label{appeq:11}
  \inf_{j\in J} \mathbf P(\pi_1(\mathcal X_j) \in
  \Gamma_\varepsilon^1)>1-\varepsilon,\qquad \inf_{j\in J} \mathbf
  P(\pi_2(\mathcal X_j)\in\Gamma_\varepsilon^2)>1-\varepsilon.
\end{align}
This is the same as (i) and (ii).

\subsection{Polynomials}
\label{sec:proofs5poly}
We prepare the proof of Theorem~\ref{T5} with some results on polynomials. We
show that polynomials separate points (Proposition~\ref{P:firstSecond1}) and
are convergence determining in $\mathbbm M^I$ (Proposition~\ref{l:aux}).

\begin{proposition}[Polynomials form an algebra that separates
  points]\mbox{}
  \label{P:firstSecond1}
  \begin{asparaenum}
  \item For $k=0,1,\dots,\infty$, the set of polynomials $\Pi^k$ is an
    algebra.  In particular, if $\Phi = \Phi^{n,\phi}\in\Pi^k_{n},
    \Psi=\Psi^{m,\psi}\in\Pi^k_m $, then
    \begin{align}
      \label{eq:301}
      (\Phi\cdot\Psi)(\smallu) = \langle \nu^\smallu, \phi \cdot
      (\psi\circ\rho_1^n)\rangle
    \end{align}
    with $\rho^n_1$ being the ``shift''
    \begin{align}
      \label{eq:rho0}
      \rho_1^n(\underline{\underline r}, \underline u) & =
      \big((r_{i+n,j+n})_{1\leq i<j}, (u_{i+n})_{i\geq 1}\big).
    \end{align}
  \item For all $k=1,2,\dots,\infty, \Pi^k$ separates points in $\mathbbm
    M^I$, i.e.\ for $\smallx, \smally\in\mathbbm M^I$ we have $\smallx =
    \smally$ iff $\Phi(\smallx) = \Phi(\smally)$ for all $\Phi\in\Pi^k$.
  \end{asparaenum}
\end{proposition}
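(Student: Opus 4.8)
The plan is to treat the two parts separately, since part (a) is an algebraic computation and part (b) is where the real work — namely an application of Theorem~\ref{T1} — takes place. For part (a), I would first argue that $\Pi^k$ is a vector space: linearity in $\phi\mapsto\langle\nu^\smallx,\phi\rangle$ is immediate, and given $\Phi^{n,\phi}$ and $\Psi^{m,\psi}$ with $n\le m$ one may view both $\phi$ and $\psi$ as elements of $\overline{\mathcal C}^{(k)}_m$ by ignoring extra coordinates, so sums stay in $\Pi^k$. For the product formula \eqref{eq:301}, the key observation is that under $\nu^\smallu = (R^{(X,r)})_\ast\mu^{\mathbb N}$, sampling an i.i.d.\ sequence $((x_k,u_k))_{k\ge1}$ and then applying the shift $\rho_1^n$ simply re-indexes to the sequence $((x_{k+n},u_{k+n}))_{k\ge1}$, which — crucially, by the i.i.d.\ property of $\mu^{\mathbb N}$ — is independent of the first $n$ samples and has the same law. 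Hence $\langle\nu^\smallu,\phi\cdot(\psi\circ\rho_1^n)\rangle$ factors as $\mathbf E[\phi(\dots)]\cdot\mathbf E[\psi(\dots)] = \Phi(\smallu)\Psi(\smallu)$, and since $\phi\cdot(\psi\circ\rho_1^n)\in\overline{\mathcal C}^{(k)}_{n+m}$, the product lies in $\Pi^k_{n+m}\subseteq\Pi^k$. This also shows $\Pi^k$ contains the constants (take $n=0$), so it is a unital algebra.

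For part (b), one direction is trivial: if $\smallx=\smally$ then $\nu^\smallx=\nu^\smally$ and hence $\Phi(\smallx)=\Phi(\smally)$ for every polynomial. For the converse, suppose $\Phi(\smallx)=\Phi(\smally)$ for all $\Phi\in\Pi^k$; I want to conclude $\nu^\smallx=\nu^\smally$ and then invoke Theorem~\ref{T1}. The strategy is to show that $\bigcup_n\overline{\mathcal C}^{(k)}_n$ is separating (indeed convergence determining) in $\mathcal M_1(\mathbb R_+^{\binom{\mathbb N}{2}}\times I^{\mathbb N})$. Since this space carries the product topology and is a countable product of the Polish spaces $\mathbb R_+$ and $I$, a measure on it is determined by its finite-dimensional marginals; and for each fixed $n$, the functions in $\overline{\mathcal C}^{(k)}_n$ that are products of $C^k$-functions of the $\binom n2$ distance coordinates with bounded continuous functions of the first $n$ marks form a measure-determining class on $\mathbb R_+^{\binom n2}\times I^n$ (smooth functions with, say, compact support are convergence determining on $\mathbb R_+^{\binom n2}$, and $\mathcal C_b(I^n)$ on $I^n$; products of convergence-determining classes on factors are convergence determining on the product — this is essentially Proposition~3.4.5 in \cite{EthierKurtz86}, which is the tool already cited in the proof of Lemma~\ref{appl:3}). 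Applying $\langle\nu^\smallx,\cdot\rangle=\langle\nu^\smally,\cdot\rangle$ on this class gives $\nu^\smallx=\nu^\smally$, and Theorem~\ref{T1} finishes.

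The main obstacle — though it is more bookkeeping than a genuine difficulty — is the regularity restriction: the definition of $\overline{\mathcal C}^{(k)}_n$ demands functions that are \emph{$k$ times continuously differentiable} in the distance coordinates \emph{and bounded}, whereas the cleanest measure-determining classes (e.g.\ all of $\mathcal C_b$, or indicator-like functions) are not differentiable. I would handle this by noting that bounded $C^\infty$ functions on $\mathbb R_+^{\binom n2}$ — for instance finite linear combinations of products of functions of the form $x\mapsto e^{-\lambda x}$ or smooth bump functions — already separate measures on $\mathbb R_+^{\binom n2}$ (via Stone–Weierstrass / a Laplace-transform argument, since such functions form a point-separating subalgebra of $\mathcal C_0$ vanishing nowhere), so the extra smoothness costs nothing. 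One should also make sure the argument is stated for all $k\ge1$ uniformly, which it is, since $\overline{\mathcal C}^{(\infty)}_n\subseteq\overline{\mathcal C}^{(k)}_n$ for every finite $k$, so it suffices to exhibit enough \emph{smooth} separating functions and these lie in every $\Pi^k$, $k\ge1$. (The case $k=0$ is excluded from part (b) only because $\overline{\mathcal C}_0$ consists of constants; but note $\Pi^0\supsetneq\overline{\mathcal C}_0$, so in fact the same argument shows $\Pi^0$ separates points too — this consistency is worth a remark but not needed for the statement.)
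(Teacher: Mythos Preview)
Your proposal is correct and follows essentially the same approach as the paper: for part~(a) you use the i.i.d.\ structure of $\mu^{\mathbb N}$ to factor the integral (the paper phrases this as the shift-invariance $(\rho_1^n)_\ast\nu^\smallu=\nu^\smallu$, which is the same observation), and for part~(b) you invoke Proposition~3.4.5 of \cite{EthierKurtz86} to see that product-form test functions separate measures on the countable product, then apply Theorem~\ref{T1} --- exactly as the paper does. Your extra discussion of why $C^k$-regularity in the distance coordinates does not spoil the separating property is more careful than the paper, which simply asserts $f_{kl}\in\overline{\mathcal C}^k(\mathbb R_+)$ suffices; this is a welcome elaboration but not a different argument.
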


\begin{proof}
  1. First, we note that the marked distance matrix distributions are
  exchangeable in the following sense: Let $\sigma: \mathbbm N\to\mathbbm N$
  be injective. Set
  \begin{align}
    \label{eq:29}
    R_\sigma: \begin{cases} \mathbbm R_+^{\binom{\mathbbm N}{2}} \times
      I^{\mathbbm N}& \to \mathbbm R_+^{\binom{\mathbbm N}{2}} \times
      I^{\mathbbm N}\\ \big((r_{ij})_{1 \leq i<j}, (u_k)_{k\geq 1}\big) &
      \mapsto \big((r_{\sigma(i)\wedge\sigma(j), \sigma(i)\vee\sigma(j)}),
      (u_{\sigma(k)})_{k\geq 1}\big).
    \end{cases}
  \end{align}
  Then, for $\smallx\in\mathbbm M^I$, we find that
  \begin{align}
    \label{eq:28a}
    (R_\sigma)_\ast \nu^\smallx = \nu^\smallx.
  \end{align}

  Next, we show that $\Pi^k$ is an algebra. Clearly, $\Pi^k$ is a linear space
  and $1\in\Pi^k$. Next consider multiplication of polynomials. By
  \eqref{eq:28a}, we find that $(\rho_1^n)_\ast \nu^\smallu = \nu^\smallu$. If
  $\Phi^{n,\phi}\in\Pi^k_{n}$, this implies
  \begin{equation}
    \label{eq:1}
    \begin{aligned}
      (\Phi\cdot\Psi)(\smallu) & = 
      \Bigl(\int \phi( \underline{\underline r}, \underline u)\nu^\smallu (d
      \underline{\underline r}, d\underline u) \Bigr)\cdot \Bigl(\int \psi(
      \rho_1^n(\underline{\underline r}, \underline u))\nu^\smallu (d
      \underline{\underline r},
      d\underline u) \Bigr)  \\
      & = \int \phi( \underline{\underline r}, \underline u)\psi(
      \rho_1^n(\underline{\underline r}, \underline u))\nu^\smallu (d
      \underline{\underline r}, d\underline u) = \langle \nu^\smallu, \phi
      \cdot (\psi \circ\rho_1^n) \rangle,
    \end{aligned}
  \end{equation}
  which shows that $\Pi^k$ is closed under multiplication as well.

  2. We turn to showing that $\Pi^k$ separates points. Recall that for
  $\smallx\in\mathbbm M^I$, the distance matrix distribution $\nu^\smallx$ is
  an element of $\mathcal M_1(\mathbbm R_+^{\binom{\mathbbm N}{2}} \times
  I^{\mathbbm N})$. On such product spaces, the set of functions
  \begin{align}
    \Big\{\phi(\underline{\underline r}, \underline u) = \prod_{k=1}^ng(u_k)
    \prod_{l=k+1}^n f_{kl}(r_{kl}): f_{kl}\in\overline{\mathcal C}^k(\mathbbm
    R_+), g_k\in \overline{\mathcal C}(I), n\in\mathbbm N\Big\}\subseteq \Pi^k
  \end{align}
  is separating in $\mathcal M_1(\mathbbm R_+^{\binom{\mathbbm N}{2}} \times
  I^{\mathbbm N})$ by Proposition 3.4.5 of \cite{EthierKurtz86}. If
  $\smallx\neq\smally$, we have $\nu^\smallx\neq\nu^\smally$ by
  Theorem~\ref{T1} and hence, there exists $\phi\in \Pi^k$ with $\langle \phi,
  \nu^\smallx\rangle \neq \langle \phi, \nu^\smally\rangle$ and hence $\Pi^k$
  separates points.
\end{proof}

\begin{proposition}[A convergence determining subset of $\Pi^\infty$]%
  \label{l:aux}\leavevmode\\
  There exists a countable algebra $\Pi^\infty_\ast \subseteq \Pi^\infty$ that
  is convergence determining in $\mathbbm M^I$, i.e.\ for $\smallx, \smallx_1,
  \smallx_2,\dots\in\mathbbm M^I$, we have
  $\smallx_n\xrightarrow{n\to\infty}\smallx$ iff
  $\Phi(\smallx_n)\xrightarrow{n\to\infty}\Phi(\smallx)$ for all
  $\Phi\in\Pi^\infty_\ast$.
\end{proposition}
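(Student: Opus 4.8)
\textbf{Proof proposal for Proposition~\ref{l:aux}.}

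The plan is to construct $\Pi^\infty_\ast$ explicitly by taking a countable subfamily of the generating functions used in the proof of Proposition~\ref{P:firstSecond1}, and then to leverage Proposition~\ref{prop:topsame} (MGW $=$ MGP) together with the relative-compactness characterization in Proposition~\ref{appP:relcp1} to upgrade ``separating'' to ``convergence determining''. Concretely, first I would fix countable sets $\mathcal{D}_{\R_+}\subseteq \overline{\mathcal C}^\infty(\R_+)$ and $\mathcal{D}_I\subseteq \mathcal C_b(I)$ that are convergence determining in $\mathcal M_1(\R_+)$ and $\mathcal M_1(I)$ respectively — these exist because $\R_+$ and $I$ are Polish, e.g.\ via \cite[Section~3.4]{EthierKurtz86}. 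For each $n$ form the finite products $\phi(\underline{\underline r},\underline u) = \prod_{1\le k<l\le n} f_{kl}(r_{kl}) \prod_{k=1}^n g_k(u_k)$ with $f_{kl}\in\mathcal{D}_{\R_+}$, $g_k\in\mathcal{D}_I$; this gives a countable family $\Pi_0\subseteq\Pi^\infty$. Let $\Pi^\infty_\ast$ be the (still countable) algebra generated by $\Pi_0$ over $\mathbb Q$; by the computation \eqref{eq:301}--\eqref{eq:1} in Proposition~\ref{P:firstSecond1} this is a subalgebra of $\Pi^\infty$.

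The next step is the key one: show $\Pi^\infty_\ast$ is convergence determining. The ``only if'' direction is immediate since each $\Phi\in\Pi^\infty_\ast$ is continuous on $\mathbbm M^I$ (continuity of polynomials follows from the definition of the MGW topology as weak convergence of $\nu^\smallx$, together with $\phi\in\overline{\mathcal C}_n$ being bounded continuous). For the ``if'' direction, suppose $\Phi(\smallx_n)\to\Phi(\smallx)$ for all $\Phi\in\Pi^\infty_\ast$. I would first establish tightness of $\{\smallx_n\}$, hence relative compactness, via Proposition~\ref{appP:relcp1}: applying the hypothesis to the marginal polynomials depending only on $r_{12}$ (ranging over $f\in\mathcal D_{\R_+}$) shows the laws $(r_{12})_\ast\nu^{\smallx_n}$ converge in $\mathcal M_1(\R_+)$, hence are tight; applying it to polynomials depending only on $u_1$ (ranging over $g\in\mathcal D_I$) shows $\pi_2(\smallx_n)$ converges, hence is relatively compact in $\mathcal M_1(I)$. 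This is enough structure to feed into the relative-compactness criteria — but wait, \eqref{eq:100} is not obviously controlled by finitely many polynomials; the cleaner route is to note that convergence of $(r_{12})_\ast\nu^{\smallx_n}$ alone does not give tightness in $\mathbbm M$, so instead I would argue as follows. Because $\{g(u_1): g\in\mathcal D_I\}$ is convergence determining, $\pi_2(\smallx_n)$ converges in $\mathcal M_1(I)$; and because the full family $\Pi_0$ (being products over all $n$ of functions from convergence-determining classes) is convergence determining in $\mathcal M_1(\R_+^{\binom{\mathbbm N}{2}}\times I^{\mathbbm N})$ by \cite[Proposition~3.4.5]{EthierKurtz86} and a standard product-space argument, the hypothesis forces $\nu^{\smallx_n}\Rightarrow\nu^\smallx$ directly, which is exactly MGW convergence $\smallx_n\to\smallx$ by Definition~\ref{def:mGw}.

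Thus the real content reduces to the lemma that the countable product family $\Pi_0$ is convergence determining on $\mathcal M_1(\R_+^{\binom{\mathbbm N}{2}}\times I^{\mathbbm N})$, which I would prove by: (a) recalling that a countable convergence-determining class on each factor $\R_+$ or $I$ exists; (b) invoking \cite[Proposition~3.4.6]{EthierKurtz86} (or the cited 3.4.5) to get that finite products of such functions over finitely many coordinates are convergence determining on the countable product; (c) checking that tightness on the product follows coordinatewise, which for a sequence of the special form $\nu^{\smallx_n}$ is where Proposition~\ref{appP:relcp1} and Remark~\ref{rem:relcpM} supply the missing compactness in the $\R_+^{\binom{\mathbbm N}{2}}$-direction — specifically, control of \eqref{eq:100} is needed and is \emph{not} implied by finitely many polynomials, so one must instead observe that convergence of \emph{all} polynomials in $\Pi_0$ already pins down the limiting measure $\nu^\smallx$ on the whole product and that any subsequential weak limit of $\nu^{\smallx_n}$ must equal $\nu^\smallx$ once tightness is known; tightness itself comes for free because a convergent sequence of polynomials applied through the full separating family, combined with the fact that $\{\nu^{\smallx_n}\}$ lives in the closed set $\{\nu^\smally:\smally\in\mathbbm M^I\}$, allows extraction via Proposition~\ref{appP:relcp1} applied to any subsequence. \textbf{The main obstacle} is precisely this last point: passing from ``$\langle\nu^{\smallx_n},\phi\rangle$ converges for $\phi$ in a countable class'' to ``$\{\smallx_n\}$ is relatively compact in $\mathbbm M^I$'', since the Gromov-weak compactness criterion \eqref{eq:100} is a genuinely infinite-dimensional condition not captured by any countable set of bounded test functions — I expect to handle it by a diagonal/subsequence argument that uses Proposition~\ref{appP:relcp1} to realize all $\smallx_n$ and the limit on a common space $Z$, reducing matters to ordinary tightness of measures on $Z\times I$, which then \emph{is} detected by the countable class.
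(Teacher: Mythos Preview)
Your core argument is correct and matches the paper's proof exactly: choose countable convergence-determining algebras $\mathcal D_{\R_+}\subseteq\overline{\mathcal C}^\infty(\R_+)$ and $\mathcal D_I\subseteq\mathcal C_b(I)$, form the finite-product family $V$ on $\R_+^{\binom{\N}{2}}\times I^{\N}$, invoke \cite[Proposition~3.4.6]{EthierKurtz86} to conclude that $V$ is convergence determining in $\mathcal M_1\bigl(\R_+^{\binom{\N}{2}}\times I^{\N}\bigr)$, and then observe that $\langle\nu^{\smallx_n},\phi\rangle\to\langle\nu^{\smallx},\phi\rangle$ for all $\phi\in V$ is, \emph{by Definition~\ref{def:mGw}}, the same as $\smallx_n\to\smallx$ in the MGW topology. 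That is the whole proof, and you state it correctly in your second paragraph.

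The ``main obstacle'' you then raise is a phantom. You are conflating two different tightness questions: tightness of $\{\nu^{\smallx_n}\}$ in $\mathcal M_1\bigl(\R_+^{\binom{\N}{2}}\times I^{\N}\bigr)$, and tightness (i.e.\ relative compactness) of $\{\smallx_n\}$ in $\mathbbm M^I$. Only the former is needed, and it is automatic: on a \emph{countable product of Polish spaces} tightness follows from tightness of each one-dimensional marginal, which in turn follows from convergence of those marginals, which your hypothesis on $\Pi_0$ gives directly. This is exactly what makes \cite[Proposition~3.4.6]{EthierKurtz86} work, and why your step~(b) already finishes the argument. Your step~(c), the appeal to Proposition~\ref{appP:relcp1}, Remark~\ref{rem:relcpM}, condition~\eqref{eq:100}, and the embedding into a common space $Z$, is entirely unnecessary: you never need to show $\{\smallx_n\}$ is relatively compact in $\mathbbm M^I$, because once $\nu^{\smallx_n}\Rightarrow\nu^{\smallx}$ you have MGW convergence \emph{by definition}, not via any compactness argument. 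Delete everything after ``which is exactly MGW convergence $\smallx_n\to\smallx$ by Definition~\ref{def:mGw}'' and your proof is complete and identical to the paper's.
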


\begin{proof}
  The necessity is clear. For the sufficiency argue as follows. Focus on the
  one-dimensional marginals of marked distance matrix distributions, which are
  elements of $\mathcal M_1(\mathbbm R_+^{\binom{\mathbbm N}{2}} \times
  I^{\mathbbm N})$ first. On the one hand by Lemma 3.2.1 of \cite{D93}, there
  exists a countable, linear set $V_{\mathbbm R_+}$ of continuous, bounded
  functions which is convergence determining in $\mathcal M_1(\mathbbm R_+)$,
  i.e.\ for $\mu, \mu_1, \mu_2,\dots\in\mathcal M_1(\mathbbm R_+)$ we have
  $\mu_n\xRightarrow{n\to\infty}\mu$ iff $\langle \mu_n, f\rangle
  \xrightarrow{n\to\infty} \langle \mu, f\rangle$ for all $f\in V_{\R_+}$. By
  an approximation argument, we can choose $V_{\mathbbm R_+}$ even such that
  it only consists of infinitely often continuously differentiable
  functions. On the other hand there exists a countable, linear set $V_I$ of
  continuous, bounded functions which is convergence determining in
  $I$. Without loss of generality, $V_{\mathbbm R_+}$ and $V_I$ are
  algebras. Since a marked distance matrix distribution $\nu^{\smallx}$ for
  $\smallx\in\mathbbm M^I$ is a probability measure on a countable product,
  Proposition 3.4.6 in \cite{EthierKurtz86} implies that the algebra
  \begin{align}\label{eq:721}
    V:=\Big\{\prod_{k=1}^n g_k(u_k) \prod_{l=k+1}^n f_{kl}(r_{kl}):
    n\in\mathbbm N, g_k\in V_I, f_{kl}\in V_{\mathbbm R_+}\Big\}
  \end{align}
  is convergence determining in $\mathcal M_1(\mathbbm R_+^{\binom{\mathbbm
      N}{2}}\times I^{\mathbbm N})$. In particular,
  \begin{align}\label{eq:722}
    \Pi_\ast^\infty := \{\smallx\mapsto \langle \nu^\smallx, \phi\rangle:
    \phi\in V\} \subseteq \Pi^\infty
  \end{align}
  is a countable algebra that is convergence determining. Indeed, for
  $\smallx, \smallx_1,\smallx_2,\dots \in\mathbbm M$, we have
  $\smallx_n\xrightarrow{n\to\infty} \smallx$ in the Gromov-weak topology iff
  $\nu^{\smallx_n}\xRightarrow{n\to\infty}\nu^\smallx$ in the weak topology on
  $\mathbbm R_+^{\binom{\mathbbm N}{2}}\times I^{\mathbbm N}$ iff $\langle
  \nu^{\smallx_n}, \phi\rangle \xrightarrow{n\to\infty} \langle \nu^\smallx,
  \phi\rangle$ for all $\phi\in V$.
\end{proof}

\subsection{Proof of Theorem~\ref{T5}}
\label{sec:proofT5}
By Theorem 3.4.5 of \cite{EthierKurtz86} and Proposition \ref{P:firstSecond1},
$\Pi^k$ is separating in $\mathcal M_1(\mathbb{M}^I)$.

We will show that $\Pi_\ast^\infty$ from Proposition~\ref{l:aux} is a
countable, convergence determining algebra in $\mathcal
M_1(\mathbb{M}^I)$. Recall $V$ and its ingredients, $V_I$ and $V_{\mathbbm
  R_+}$ from the proof of Proposition~\ref{l:aux}.
By Lemma~3.4.3 in \cite{EthierKurtz86}, we have that $\mathcal X_n
\xRightarrow{n\to\infty} \mathcal X$ iff (i) $\mathbf E[\Phi(\mathcal X_n)]
\xrightarrow{n\to\infty} \mathbf E[\Phi(\mathcal X)]$ for all
$\Phi\in\Pi_\ast^\infty$ and (ii) the family of distributions of $\{\mathcal
X_n: n\in\mathbbm N\}$ is tight. We will show that (i) implies (ii).

By Theorem~\ref{T4} we have to show that (i) implies that
\begin{align}\label{eq:997}
  \text{the family of distributions of }\{\pi_i(\mathcal X_n): n\in\mathbbm
  N\} \text{ is tight for }i=1,2.
\end{align}
Before we prove this relation we need some new objects and auxiliary facts.

For $(\underline{\underline r}, \underline u) \in \mathbbm
R_+^{\binom{\mathbbm N}{2}} \times I^{\mathbbm N}$ and $\varepsilon>0$, we set
\begin{equation}
  \label{eq:999}
  \begin{aligned}
    v(\underline{\underline r}, \underline u) &:= u_1,\\
    w(\underline{\underline r}, \underline u) &:= r_{12},\\
    z_\varepsilon(\underline{\underline r}, \underline u) & :=
    \limsup_{n\to\infty} \tfrac{1}{n} \sum_{i=2}^n 1_{\{r_{1n}<\varepsilon\}}.
  \end{aligned}
\end{equation}
Moreover, for a random variable $\mathcal Y$ with values in $\mathbbm M^I$, we
define $(\underline{\underline R}, \underline U)^{\mathcal Y}$ as the random
variable with values in $\mathbbm R_+^{\binom{\mathbbm N}{2}} \times
I^{\mathbbm N}$, such that given $\mathcal Y = \smally$,
$(\underline{\underline R},\underline U)^{\mathcal Y}$ has distribution
$\nu^{\smally}$. We have
\begin{equation}
  \label{eq:trans1}
  \begin{aligned}
    \mathbf E[\phi((\underline{\underline R}, \underline U)^{\mathcal X_n})] &
    = \mathbf E\big[\mathbf E[\phi((\underline{\underline R}, \underline
    U)^{\mathcal X_n})|\mathcal X_n]\big] \\ & = \mathbf E[\langle
    \nu^{\mathcal X_n}, \phi\rangle] \xrightarrow{n\to\infty} \mathbf
    E[\langle \nu^{\mathcal X}, \phi\rangle] = \mathbf
    E[\phi((\underline{\underline R}, \underline U)^{\mathcal X})],
  \end{aligned}
\end{equation}
for all $\phi \in V$ by Assumption (i). Since $V$ is convergence determining
in $\mathcal M_1(\mathbbm R^{\binom{\mathbbm N}{2}} \times I^{\mathbbm N})$,
we note that
\begin{align}\label{eq:trans0}
  (\underline{\underline R}, \underline U)^{\mathcal X_n}
  \xRightarrow{n\to\infty} (\underline{\underline R}, \underline U)^{\mathcal
    X}.
\end{align}

In order to show \eqref{eq:997} for $i=1$, by Theorem 3 of
\cite{GPWmetric09}, we need to show that \eqref{eq:trans0} implies
\begin{enumerate}
\item[(a)] $\{w({\underline{\underline R}}^{\mathcal X_n}): n\in\mathbbm N\}$
  is tight
\item[(b)] For all $\varepsilon>0$ there exists $\delta>0$ such that
  $\limsup_{n\to\infty} \mathbf P(z_\varepsilon(\underline{\underline
    R}^{\mathcal X_n})<\delta) < \varepsilon$.
\end{enumerate}
For (a), note that by \eqref{eq:trans1}
\begin{equation}
  \label{eq:996}
  \begin{aligned}
    \mathbf E[f(w(({\underline{\underline R}}, \underline U)^{\mathcal X_n})]
    \xrightarrow{n\to\infty} \mathbf E[f(w(({\underline{\underline R}},
    \underline U)^{\mathcal X})]
  \end{aligned}
\end{equation}
\sloppy for all $f\in V_{\mathbbm R_+}$. Hence, since $V_{\mathbbm R_+}$ is
convergence determining in $\mathbbm R_+$, $w(({\underline{\underline R}},
\underline U)^{\mathcal X_n}) \xRightarrow{n\to\infty}
w(({\underline{\underline R}}, \underline U)^{\mathcal X})$, and in
particular, (a) holds.

\medskip

\noindent
For (b), consider the distribution of $z_\varepsilon((\underline{\underline
  R}, \underline U)^{\mathcal X})$. Since the single random variable $\mathcal
X$ is tight in $\mathbbm M$, by Theorem 3 of \cite{GPWmetric09}, we find
$\delta>0$ such that $\mathbf P(z_\varepsilon((\underline{\underline R},
\underline U)^{\mathcal X})<\delta) < \varepsilon$ and
$z_\varepsilon((\underline{\underline R}, \underline U)^{\mathcal X})$ does
not have an atom at $\delta$. By the latter property, the set
$A:=\{(\underline{\underline r}, \underline u):
z_{\varepsilon}(\underline{\underline r}, \underline u) < \delta\}$ has the
property $\partial A \subseteq \{(\underline{\underline r}, \underline u):
z_{\varepsilon}(\underline{\underline r}, \underline u) = \delta\}$ and thus,
$\mathbf P(({\underline {\underline R}, \underline U)}^{\mathcal X}
\in \partial A)=0$, since $z_\varepsilon(\underline{\underline R}, \underline
U)^{\mathcal X})$ does not have an atom at $\delta$. By the Portmanteau
Theorem,
\begin{equation}
  \label{eq:995}
  \begin{aligned}
    \mathbf P(z_\varepsilon((\underline{\underline R}, \underline U)^{\mathcal
      X_n})<\delta) & = \mathbf P((\underline{\underline R}, \underline
    U)^{\mathcal X_n} \in A) \xrightarrow{n\to\infty} \mathbf
    P((\underline{\underline R}, \underline U)^{\mathcal X} \in A) \\ & =
    \mathbf P(z_\varepsilon((\underline{\underline R}, \underline U)^{\mathcal
      X})<\delta)<\varepsilon.
  \end{aligned}
\end{equation}
This shows (b).

In order to obtain \eqref{eq:997} for $i=2$, note that $v_\ast
\nu^{\mathcal X_n} \in \mathcal M_1(I)$ is the first moment measure of
the distribution of the $\mathcal M_1(I)$-valued random variable
$\pi_2(\mathcal X_n)$ and recall that tightness in $\mathcal
M_1(\mathcal M_1(I))$ is implied by tightness of the first moment
measure. By \eqref{eq:trans1}, we find that for $g\in V_I$
\begin{align}
  \mathbf E[g(v((\underline{\underline R}, \underline U)^{\mathcal X_n}))]
  \xrightarrow{n\to\infty} \mathbf E[g(v((\underline{\underline R}, \underline
  U)^{\mathcal X}))],
\end{align}
so $v((\underline{\underline R}, \underline U)^{\mathcal X_n})
\xRightarrow{n\to\infty}v((\underline{\underline R}, \underline
U)^{\mathcal X})$ and, in particular, \eqref{eq:997} holds for $i=2$.

\subsubsection*{Acknowledgments}
AD and PP acknowledge support from the Federal Ministry of Education
and Research, Germany (BMBF) through FRISYS (Kennzeichen 0313921) and
AG from the DFG Grant GR 876/14.  Part of this work has been carried
out when AD was taking part in the Junior Trimester Program
Stochastics at the Hausdorff Center in Bonn: hospitality and financial
support are gratefully acknowledged.

\bibliographystyle{plain}


\end{document}